\documentclass[11pt]{amsproc}

\usepackage{amssymb, amsfonts, epsfig}
\newtheorem{Thm}{Theorem}[section]
\newtheorem{corollary}[Thm]{Corollary}
\newtheorem{lemma}[Thm]{Lemma}
\newtheorem{proposition}[Thm]{Proposition}
\newtheorem{definition}[Thm]{Definition}
\newtheorem{remark}[Thm]{Remark}

\newtheorem{example}[Thm]{Example}

\newtheorem{theorem}[Thm]{Theorem}
\newtheorem{problem}[Thm]{Problem}
\newcommand{\norm}[1]{\|#1\|}
\DeclareMathOperator{\lspan}{span}
\DeclareMathOperator{\codimm}{codim}
\DeclareMathOperator{\dimm}{dim}
\DeclareMathOperator{\spark}{spark}

\begin{document}

\title{Phase Retrieval}
\author[J. Cahill, P.G. Casazza, J. Jasper, L. Woodland]{Jameson Cahill, Peter G. Casazza, John Jasper, and Lindsey M. Woodland}
\address{Department of Mathematics, Duke University, Durham, NC, USA}
\address{Department of Mathematics, University of Missouri, Columbia, MO, USA}
\address{Department of Mathematical Sciences, 4199 French Hall West, University of Cincinnati, 2815 Commons Way, Cincinnati, OH, USA}
\address{435 North Oak Park Ave., Unit 2, Oak Park, IL, USA}


\email{J.C. Email:jameson.cahill@gmail.com, P.G.C Email:casazzap@missouri.edu (Send correspondence to P.G.C.), J.J. Email:john.jasper@uc.edu, L.M.W. Email:lmwvh4@gmail.com}

\begin{abstract}
We answer a number of open problems concerning phase retrieval and phase retrieval by projections. In particular, one main theorem classifies phase retrieval by projections via collections of sequences of vectors allowing norm retrieval. Another key result computes the minimal number of vectors needed to add to a frame in order for it to possess the complement property and hence allow phase retrieval. In furthering this idea, in a third main theorem we show that when a collection of subspaces is one subspace short from allowing phase retrieval, then any partition of orthonormal bases from these subspaces into two sets which fail to span, then each spans a hyperplane. We offer many more results in this area as well as provide a large number of examples showing the limitations of the theory.
\end{abstract}

\keywords{Hilbert space frames, Frame operator, Phase retrieval, Spark}

\maketitle

\section{Introduction}

In the setting of frame theory, the concept of phaseless reconstruction was introduced in 2006 by Balan, Casazza, and Edidin \cite{Balan06}. At that time, they classified phaseless reconstruction in the real case by proving that a {\it generic} family of (2N-1)-vectors in $\mathcal{R}^N$ does phaseless reconstruction and no set of (2N-2)-vectors can do this.  In the complex case, they showed that a {\it generic} set of (4N-2)-vectors does phaseless reconstruction. Since then there has been a multitude of mathematical research devoted to this area. One area in particular is the study of phaseless reconstruction by projections onto subspaces. An in depth study of phase retrieval by projections was done by Cahill, Casazza, Peterson, and Woodland in \cite{CCPW} where they showed that phase retrieval by projections can be done in the real case with $(2N-1)$-projections of arbitrary non-trivial rank and in the complex case with $(4N-2)$-projections. We continue the study of phase retrieval by providing a variety of new results in both the one-dimensional and higher dimensional cases as well as giving a large number of examples showing the limitations of the theory. 

The complex case has proven to be a very difficult problem and as such has yet to be fully classified. There has been incremental progress towards this end and in particular Heinossaari, Mazzarella and Wolf \cite{HMW} showed that M-vectors doing phaseless reconstruction in $\mathcal{C}^N$ requires $M\ge 4N-4-2\alpha$, where $\alpha$ is the number of $1's$ in the binary expansion of $(N-1)$.  Bodmann \cite{BN} later showed that phaseless reconstruction in $\mathcal{C}^N$ can be done with $(4N-4)$-vectors.  Following this, Conca, Edidin, Hering, and Vinzant \cite{CEHV} proved that a {\it generic} frame with $(4N-4)$-vectors does phaseless reconstruction in $\mathcal{C}^N$.  They also showed that if $N=2^k+1$ then no set of $M$-vectors with $M< 4N-4$ can do phaseless reconstruction. Bandeira, Cahill, Mixon, and Nelson \cite{BCMN} conjectured that for all $N$, no fewer than $(4N-4)$-vectors can do phaseless reconstruction.  Recently, Vinzant \cite{V} showed that this conjecture does not hold by giving 11 vectors in $\mathcal{C}^4$ which do phase retrieval.  And recently, Xu \cite{X} showed that phase retrieval can be done in $\mathcal{R}^4$ with six 2-dimensional subspaces.

\section{Frame Theory}

This section first provides a basic review of necessary terms and theorems from finite frame theory and then goes on to develop numerous new results regarding frames, Riesz bases and complements. For a more in depth study of finite frame theory the interested reader is referred to {\it Finite Frames: Theory and Applications} \cite{petesbook}. To set notation, given $N\in \mathcal{N}$, $\mathcal{H}_{N}$ represents a $($real or complex$)$ Hilbert space of $($finite$)$ dimension $N$.

\begin{definition}
A family of vectors $\{\varphi_i\}_{i=1}^M$ in an $N$-dimensional Hilbert space $\mathcal{H}_N$ is a {\bf frame} if there are constants $0 < A \leq B < \infty$ so that for all $x \in \mathcal{H}_M$,
\begin{equation*} 
A \| x \|^2 \leq \sum_{i=1}^M |\langle x,\varphi_i\rangle|^2 \leq B \|x\|^2,
\end{equation*}
where $A$ and $B$ are {\bf lower and upper frame bounds}, respectively.  The largest $A$ and smallest $B$ satisfying these inequalities are called the {\bf optimal frame bounds}.

\begin{enumerate}

\item If $A = B$ is possible, then $\{\varphi_i\}_{i=1}^M$ is an {\bf A-tight frame}. If $A = B = 1$ is possible, then $\{\varphi_i\}_{i=1}^M$ is a {\bf Parseval frame}.

\item If $\|\varphi_i\| = 1$ for all $i \in [M]$ then $\{\varphi_i\}_{i=1}^M$ is an {\bf unit norm frame}. 

\item $\{\langle x,\varphi_i\rangle \}_{i = 1}^M$ are called the {\bf frame coefficients} of the vector $x\in \mathcal{H}_N$ with respect to frame $\{\varphi_i\}_{i=1}^M$.

\item The {\bf analysis operator} of the frame is 
$T:\mathcal{H}_N\rightarrow \ell_2^M$ given by
\[ T(x) = \{\langle x,\phi_i\rangle\}_{i=1}^M.\]

\item The {\bf frame operator S} of the frame, for any $x \in \mathcal{H}_N$, is given by
\[ Sx = \sum_{i=1}^M\langle x,\varphi_i\rangle \varphi_i.\]
\end{enumerate}
\end{definition}

The frame operator is a particularly important object in the study of frame theory. 

\begin{theorem}
Let $\{\varphi_{i}\}_{i=1}^M$ be a frame for $\mathcal{H}_N$ with frame bounds $A$ and $B$ and frame operator $S$. $S$ is positive, invertible, and self adjoint. Moreover, the optimal frame bounds of $\{\varphi_i\}_{i=1}^M$ are given by $A=\lambda_{min}(S)$ and $B=\lambda_{max}(S)$, the minimum and maximum eigenvalues of $S$, respectively. 
\end{theorem}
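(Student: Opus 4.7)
The plan is to reduce everything to the identity $S = T^{*}T$, where $T$ is the analysis operator, and then invoke standard facts about positive self-adjoint operators on a finite-dimensional Hilbert space.

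First I would verify that $S = T^{*}T$. Given $T(x) = \{\langle x, \varphi_i\rangle\}_{i=1}^M$, a direct computation of the adjoint shows that $T^{*}(\{c_i\}_{i=1}^M) = \sum_{i=1}^M c_i\varphi_i$, and therefore $T^{*}Tx = \sum_{i=1}^M \langle x,\varphi_i\rangle\varphi_i = Sx$. From this, self-adjointness is immediate: $S^{*} = (T^{*}T)^{*} = T^{*}T = S$. Moreover, for any $x\in\mathcal{H}_N$,
\[
\langle Sx, x\rangle \;=\; \langle T^{*}Tx, x\rangle \;=\; \langle Tx, Tx\rangle \;=\; \sum_{i=1}^M |\langle x,\varphi_i\rangle|^2,
\]
which is non-negative, so $S$ is positive. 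The frame inequality then reads
\[
A\|x\|^2 \;\le\; \langle Sx,x\rangle \;\le\; B\|x\|^2 \qquad \text{for all } x \in \mathcal{H}_N.
\]

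Next I would deduce invertibility. Since $A>0$, the lower bound forces $\langle Sx,x\rangle > 0$ whenever $x\ne 0$, so $S$ is injective; in finite dimensions this upgrades to invertibility. For the statement about the optimal bounds, I would use that $S$, being positive and self-adjoint on a finite-dimensional space, admits an orthonormal eigenbasis with positive eigenvalues. The Rayleigh quotient characterization gives
\[
\lambda_{\min}(S) = \min_{x\ne 0} \frac{\langle Sx,x\rangle}{\|x\|^2}, \qquad \lambda_{\max}(S) = \max_{x\ne 0} \frac{\langle Sx,x\rangle}{\|x\|^2},
\]
and each extremum is actually attained on a corresponding eigenvector. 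Therefore $\lambda_{\min}(S)$ is the largest constant $A$ for which $A\|x\|^2 \le \langle Sx,x\rangle$ holds for all $x$, and $\lambda_{\max}(S)$ is the smallest constant $B$ for which $\langle Sx,x\rangle \le B\|x\|^2$ holds for all $x$. In the language of the definition, these are precisely the optimal frame bounds.

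There is no real obstacle here; the only subtlety worth stating carefully is the identification of the adjoint of $T$, since everything else flows from the resulting factorization $S = T^{*}T$ together with the spectral theorem for self-adjoint matrices. I would take care to remark that finite-dimensionality is what promotes injectivity to invertibility and guarantees that the Rayleigh extrema are attained.
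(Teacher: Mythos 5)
Your proof is correct. The paper states this theorem only as standard background from finite frame theory and gives no proof of its own (it defers to the reference \emph{Finite Frames: Theory and Applications}), so there is nothing to compare against; your argument via the factorization $S=T^{*}T$, the frame inequality rewritten as $A\|x\|^2\le\langle Sx,x\rangle\le B\|x\|^2$, and the Rayleigh quotient characterization of the extreme eigenvalues is exactly the standard proof one would find there.
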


In particular, the frame operator of a Parseval frame is the identity operator. This fact makes Parseval frames very helpful in applications because they possess the property of perfect reconstruction. That is, $\{\varphi_i\}_{i=1}^M$ is a Parseval frame for $\mathcal{H}_N$ if and only if for any $x\in \mathcal{H}_N$ we have 
\[x=\sum_{i=1}^M\langle x,\varphi_i\rangle \varphi_i.\]

There is a direct method for constructing Parseval frames.  For $M \geq N$, given an $M\times M$ unitary matrix, select any $N$ rows from this matrix, then the column vectors from these rows form a Parseval frame for $\mathcal{H}_N$. Moreover, the leftover set of $M-N$ rows also have the property that its $M$ columns form a Parseval frame for $\mathcal{H}_{M-N}$. The following well known theorem, known as {\bf Naimark's Theorem}, utilizes this type of construction and shows that this is the only way to obtain Parseval frames.

\begin{theorem}[Naimark's Theorem; page 36 of \cite{petesbook}]\label{naimark}
Let $\Phi=\{\varphi_i \}_{i=1}^M$ be a frame for $\mathcal{H}_N$ with analysis operator $T$, let $\{e_i\}_{i=1}^M$ be the standard basis of $\ell_2\left(M\right)$, and let $P:\ell_2\left(M\right) \rightarrow \ell_2\left(M\right)$ be the orthogonal projection onto $\mbox{range}\left(T\right)$. Then the following conditions are equivalent:
\begin{enumerate}
\item $\{\varphi_i\}_{i=1}^M$ is a Parseval frame for $\mathcal{H}_N$.
\item For all $i=1,\dots,M$, we have $Pe_i=T\varphi_i$.
\item There exist $\psi_1,\dots, \psi_M \in \mathcal{H}_{M-N}$ such that $\{ \varphi_i \oplus \psi_i \}_{i=1}^M$ is an orthonormal basis of $\mathcal{H}_M$.
\end{enumerate}

Moreover, if (3) holds, then $\{\psi_i\}_{i=1}^M$ is a Parseval frame for $\mathcal{H}_{M-N}$. If $\{\psi'_i\}_{i=1}^M$ is another Parseval frame as in (3), then there exists a unique unitary operator $L$ on $\mathcal{H}_{M-N}$ such that $L\psi_i=\psi_i',$ for all $i=1,\dots,M$.
\end{theorem}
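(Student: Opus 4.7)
The plan is to establish the chain (1) $\Leftrightarrow$ (2) $\Leftrightarrow$ (3) and then derive the two parts of the \emph{moreover} statement. Throughout, I will use three basic identities: the synthesis operator $T^\ast:\ell_2(M)\to\mathcal{H}_N$ satisfies $T^\ast e_i=\varphi_i$; the composition $T^\ast T=S$ is the frame operator; and $T^\ast P=T^\ast$ since $P$ is the orthogonal projection onto $\mathrm{range}(T)$.

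For (1) $\Leftrightarrow$ (2) I would argue as follows. If $\Phi$ is Parseval, then $S=I$ forces $T^\ast T=I$, so $T$ is an isometry and therefore $P=TT^\ast$; hence $Pe_i=TT^\ast e_i=T\varphi_i$. Conversely, if $Pe_i=T\varphi_i$ for all $i$, applying $T^\ast$ yields $\varphi_i=T^\ast e_i=T^\ast Pe_i=T^\ast T\varphi_i=S\varphi_i$, so $S$ is the identity on $\lspan\{\varphi_i\}_{i=1}^M=\mathcal{H}_N$.

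The implication (3) $\Rightarrow$ (1) is immediate: given $\{\psi_i\}_{i=1}^M$ as in (3), expanding $x\oplus 0\in\mathcal{H}_N\oplus\mathcal{H}_{M-N}$ in the orthonormal basis $\{\varphi_i\oplus\psi_i\}$ gives
\[\|x\|^2=\sum_{i=1}^M|\langle x\oplus 0,\varphi_i\oplus\psi_i\rangle|^2=\sum_{i=1}^M|\langle x,\varphi_i\rangle|^2.\]
For (1) $\Rightarrow$ (3), I would use (2): since $T$ is an isometry, $W=\mathrm{range}(T)^\perp\subseteq\ell_2(M)$ has dimension $M-N$; choose any unitary $U:W\to\mathcal{H}_{M-N}$ and set $\psi_i=U(I-P)e_i$. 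The map $V:\ell_2(M)\to\mathcal{H}_N\oplus\mathcal{H}_{M-N}=\mathcal{H}_M$ defined by $Vf=T^\ast f\oplus U(I-P)f$ splits $\ell_2(M)$ into the orthogonal pieces $\mathrm{range}(T)$ and $W$, on each of which it acts isometrically; thus $V$ is unitary, and $Ve_i=T^\ast Pe_i\oplus U(I-P)e_i=\varphi_i\oplus\psi_i$ by (2), so the image of the orthonormal basis $\{e_i\}$ is the required orthonormal basis of $\mathcal{H}_M$.

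For the \emph{moreover}, Parseval-ness of $\{\psi_i\}$ in $\mathcal{H}_{M-N}$ follows by repeating the (3) $\Rightarrow$ (1) expansion with $0\oplus y$ in place of $x\oplus 0$. For uniqueness of $L$, orthonormality of $\{\varphi_i\oplus\psi_i\}$ and of $\{\varphi_i\oplus\psi_i'\}$ yields the Gram identity
\[\langle\psi_i,\psi_j\rangle=\delta_{ij}-\langle\varphi_i,\varphi_j\rangle=\langle\psi_i',\psi_j'\rangle,\]
so the linear map defined by $\sum_i a_i\psi_i\mapsto\sum_i a_i\psi_i'$ preserves norms; since both Parseval frames span $\mathcal{H}_{M-N}$, this map is well-defined, extends to a unitary $L$ on $\mathcal{H}_{M-N}$, and is uniquely determined by the prescription $L\psi_i=\psi_i'$. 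The main obstacle I anticipate is setting up the identification of $\ell_2(M)$ with $\mathcal{H}_N\oplus\mathcal{H}_{M-N}$ in (1) $\Rightarrow$ (3) cleanly enough that $\varphi_i\oplus\psi_i$ literally sits in $\mathcal{H}_M$ rather than in the abstract $\ell_2(M)$; the two-block description of $V$ above is the cleanest way I see to finesse this bookkeeping.
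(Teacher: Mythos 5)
Your argument is correct: the identities $T^\ast e_i=\varphi_i$, $T^\ast P=T^\ast$, and $P=TT^\ast$ for an isometry carry each equivalence, the two-block unitary $V$ handles (1) $\Rightarrow$ (3) cleanly, and the Gram-matrix computation gives the unitary $L$ and its uniqueness. Note that the paper itself offers no proof here --- it quotes Naimark's Theorem from page 36 of \cite{petesbook} --- and your proof is essentially the standard textbook argument for that result, so there is nothing to reconcile.
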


Explicitly, we call $\{\psi_i\}_{i=1}^M$ the {\bf Naimark Complement} of $\Phi$. If $\Phi = \{\varphi_i\}_{i=1}^M$
is a Parseval frame, then the analysis operator $T$ of the
frame is an isometry.  So we can associate 
$\varphi_i$ with $T\varphi_i = Pe_i$, and with a slight
 abuse of notation
we have:

\begin{theorem}[Naimark's Theorem]
$\Phi = \{\varphi_i\}_{i=1}^M$ is a Parseval frame for
$\mathcal{H}_N$ if and only if there is an $M$-dimensional Hilbert space $\mathcal{K}_M$ with an orthonormal
basis $\{e_i\}_{i=1}^M$ such that the orthogonal projection
$P:\mathcal{K}_M \rightarrow \mathcal{H}_N$ satisfies
$Pe_i= \varphi_i$ for all $i=1,\dots,M$.  Moreover, the Naimark complement of
$\Phi$ is $\{(I-P)e_i\}_{i=1}^M.$
\end{theorem}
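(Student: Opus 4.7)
The plan is to deduce this reformulation directly from the previous version of Naimark's Theorem, using condition (3) to construct the enveloping Hilbert space explicitly. For the forward direction, suppose $\Phi = \{\varphi_i\}_{i=1}^M$ is a Parseval frame for $\mathcal{H}_N$. By condition (3) of the previous theorem, there exist $\psi_1,\dots,\psi_M \in \mathcal{H}_{M-N}$ such that $\{\varphi_i \oplus \psi_i\}_{i=1}^M$ is an orthonormal basis of $\mathcal{H}_M = \mathcal{H}_N \oplus \mathcal{H}_{M-N}$. I would take $\mathcal{K}_M := \mathcal{H}_M$ with orthonormal basis $e_i := \varphi_i \oplus \psi_i$, and let $P:\mathcal{K}_M \to \mathcal{H}_N$ be the orthogonal projection onto the first summand. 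Then $Pe_i = \varphi_i$ is immediate, and $(I-P)e_i = \psi_i$, which is precisely the Naimark complement as claimed.

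For the converse, suppose $\mathcal{K}_M$, an orthonormal basis $\{e_i\}_{i=1}^M$, and an orthogonal projection $P:\mathcal{K}_M \to \mathcal{H}_N$ are given satisfying $Pe_i = \varphi_i$. Given any $x \in \mathcal{H}_N$, using self-adjointness of $P$ together with $Px = x$, I would compute
$$\sum_{i=1}^M |\langle x, \varphi_i\rangle|^2 = \sum_{i=1}^M |\langle x, Pe_i\rangle|^2 = \sum_{i=1}^M |\langle Px, e_i\rangle|^2 = \sum_{i=1}^M |\langle x, e_i\rangle|^2 = \|x\|^2,$$
where the last equality is Parseval's identity for the orthonormal basis $\{e_i\}$. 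This establishes the Parseval frame condition for $\Phi$.

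There is no serious obstacle here: the statement is essentially a rephrasing of the previous version of Naimark's Theorem designed to eliminate the analysis operator from the notation. The only point requiring care is the identification flagged in the ``abuse of notation'' preceding the statement, namely that $\mathcal{H}_N$ is being viewed as the range $P(\mathcal{K}_M) \subset \mathcal{K}_M$, so that $\varphi_i$ and $Pe_i$ literally coincide rather than merely being related by an isometric embedding. Once that identification is made explicit, the equivalence with the previous formulation and the description of the Naimark complement as $\{(I-P)e_i\}_{i=1}^M$ follow immediately from the decomposition $\mathcal{K}_M = \mathcal{H}_N \oplus \mathcal{H}_N^\perp$.
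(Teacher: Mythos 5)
Your proof is correct and matches the paper's intent: the paper offers no separate proof of this version, presenting it as an immediate restatement of the preceding Naimark's Theorem obtained by identifying $\varphi_i$ with $T\varphi_i = Pe_i$ through the isometric analysis operator. Your derivation via condition (3) (taking $\mathcal{K}_M = \mathcal{H}_N \oplus \mathcal{H}_{M-N}$ with $e_i = \varphi_i \oplus \psi_i$), together with the direct Parseval computation for the converse, supplies exactly the details the paper leaves implicit.
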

 
Note that Naimark complements are only defined for Parseval frames.  Furthermore, Naimark complements  are only defined up to unitary equivalence. That is, if $\{\varphi_i\}_{i=1}^M \subseteq\mathcal{H}_N$ and $\{\psi_i\}_{i=1}^M\subseteq\mathcal{H}_{M-N}$ are Naimark complements, and $U$ and $V$ are unitary operators, then $\{U\varphi_i\}_{i=1}^M$ and $\{V\psi_i\}_{i=1}^M$ are also Naimark complements.

To clarify terminology, as mentioned in Naimark's Theorem and as will be throughout this paper, an {\it orthogonal projection} or simply a {\it projection} is a self-adjoint projection.

Frames in the finite dimensional setting are just spanning sets. At times, it is useful to look at subsets of a frame which are also spanning sets.

\begin{definition}
A frame $\{\varphi_i\}_{i=1}^M$ in $\mathcal{H}_N$ satisfies the {\bf complement property} if for all subsets $S \subset \{1,\dots, M\}$, either $\lspan \{\varphi_i\}_{i\in S}=\mathcal{H}_N$ or $\lspan \{\varphi_i\}_{i\in S^c}=\mathcal{H}_N$.
\end{definition}

\begin{definition}
Given a family of vectors $\Phi=\{\varphi_i\}_{i=1}^M$ in $\mathcal{H}_N$, the {\bf spark} of $\Phi$ is defined as the cardinality of the smallest linearly dependent subset of $\Phi$.  When $\spark(\Phi)=N+1$, every subset of size $N$ is linearly independent and $\Phi$ is said to be {\bf full spark}.
\end{definition}

\begin{remark}
Let $\Phi:=\{\varphi_i\}_{i=1}^M$ be a frame in $\mathcal{H}_N$ such that $M\geq 2N-1$.
\begin{enumerate}
\item If $\Phi$ is full spark then $\Phi$ has the complement property.
\item If $\Phi$ has the complement property and $M=2N-1$ then $\Phi$ is full spark. 
\end{enumerate}
\end{remark}

The notion of spark is the measure of how resilient a frame is against erasures, so {\it full spark} is a desired property of a frame. In general, it is very difficult to check the spark of a frame. In attempts to further characterize full spark frames, in \cite{CCJW} the authors classified full spark Parseval frames through the use of the frame's Naimark complement.

\begin{proposition}\cite{CCJW}\label{cor1314}
 A Parseval frame is full spark if and only if its Naimark
 complement is full spark.
 \end{proposition}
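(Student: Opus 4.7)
The plan is to leverage the concrete description of Naimark complements from the version of Theorem 2.3 that realizes $\Phi$ and its complement as projections of a single orthonormal basis. Concretely, write $\mathcal{H}_M = \mathcal{H}_N \oplus \mathcal{H}_{M-N}$, let $P$ and $Q=I-P$ be the two coordinate projections, and choose an orthonormal basis $\{e_i\}_{i=1}^M$ of $\mathcal{H}_M$ with $Pe_i = \varphi_i$ and $Qe_i = \psi_i$. Full spark of $\Phi$ asserts that every $N$-element subset $\{\varphi_i\}_{i\in S}$ ($|S|=N$) is linearly independent, while full spark of $\Psi$ asserts that every $(M-N)$-element subset $\{\psi_j\}_{j\in T}$ ($|T|=M-N$) is linearly independent. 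The bijection $S\mapsto S^c$ between $N$-subsets and $(M-N)$-subsets of $[M]$ suggests the right statement to prove, namely that for each $S\subset[M]$ with $|S|=N$, the set $\{\varphi_i\}_{i\in S}$ is independent if and only if $\{\psi_j\}_{j\in S^c}$ is independent.

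For a fixed $S$, I would set $V=\lspan\{e_i\}_{i\in S}$ and $W=\lspan\{e_j\}_{j\in S^c}$, so that $V$ and $W$ are orthogonal complements in $\mathcal{H}_M$ with $\dim V = N$ and $\dim W = M-N$. Because $P|_V$ has image $\lspan\{\varphi_i\}_{i\in S}$ in the $N$-dimensional space $\mathcal{H}_N$, the family $\{\varphi_i\}_{i\in S}$ is linearly independent precisely when $P|_V$ is injective, i.e.\ when $V\cap(\{0\}\oplus\mathcal{H}_{M-N})=\{0\}$. By the same reasoning applied to $Q|_W$, the family $\{\psi_j\}_{j\in S^c}$ is linearly independent exactly when $W\cap(\mathcal{H}_N\oplus\{0\})=\{0\}$.

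The core step is then to show that these two intersections have the same dimension. I would appeal to the standard identities
\[
\dimm(A\cap B)=\dimm A+\dimm B-\dimm(A+B),\qquad A+B=(A^\perp\cap B^\perp)^\perp,
\]
applied to $A=V$, $B=\{0\}\oplus\mathcal{H}_{M-N}$, using $A^\perp=W$ and $B^\perp=\mathcal{H}_N\oplus\{0\}$. Since $\dimm A+\dimm B=M$, a short manipulation collapses to $\dimm(V\cap(\{0\}\oplus\mathcal{H}_{M-N}))=\dimm(W\cap(\mathcal{H}_N\oplus\{0\}))$, so one intersection is trivial exactly when the other is.

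I expect the only real obstacle to be choosing the right pairing of subsets to make the biconditional symmetric; once one sees that $N$-subsets of $\Phi$ must be compared with their complementary $(M-N)$-subsets of $\Psi$, the rest is the orthogonal-complement dimension count sketched above, and the proposition follows by letting $S$ range over all $N$-subsets of $[M]$.
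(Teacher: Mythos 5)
Your argument is correct. The paper itself gives no proof of this proposition (it is quoted from \cite{CCJW}), so there is nothing internal to compare against line by line; but your route is essentially the standard one, and in fact the key biconditional you isolate --- for $|S|=N$, $\{Pe_i\}_{i\in S}$ is independent iff $\{(I-P)e_j\}_{j\in S^c}$ is independent --- is an immediate special case of the \cite{BCPS} theorem the paper quotes right before this proposition: that theorem says $\{Pe_i\}_{i\in S}$ is independent iff $\{(I-P)e_j\}_{j\in S^c}$ spans $(I-P)\mathcal{H}_M$, and when $|S^c|=M-N=\dimm\bigl((I-P)\mathcal{H}_M\bigr)$ spanning and independence coincide. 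Your self-contained dimension count replaces that citation: the reduction of independence of $\{Pe_i\}_{i\in S}$ to injectivity of $P|_V$ is sound (an injective map sends a basis of $V$ to an independent set, and conversely), and the identity $\dimm\bigl(V\cap\ker P\bigr)=M-\dimm(V+\ker P)=\dimm\bigl(V^\perp\cap(\ker P)^\perp\bigr)=\dimm\bigl(W\cap\ker Q\bigr)$ is exactly right, using that $V^\perp=W$ because $\{e_i\}$ is orthonormal. The pairing $S\mapsto S^c$ correctly matches the $N$-subsets relevant to full spark of $\Phi$ with the $(M-N)$-subsets relevant to full spark of its Naimark complement, so the proposition follows. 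The only thing worth adding is a word on the degenerate case $M=N$, where the Naimark complement lives in the zero space and the statement is vacuous.
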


Often times in applications, a frame is linearly dependent and hence the decomposition of a signal with respect to a frame is not unique. However, it may be necessary to have a unique decomposition without restricting the frame to such properties as orthogonality. A \emph{Riesz basis} provides this uniqueness and does not have as strong of a condition as orthogonality.

\begin{definition}
A spanning family of vectors $\{\varphi_i\}_{i=1}^N$ in a Hilbert space $\mathcal{H}_N$ is called a {\bf Riesz basis} with \emph{lower} (respectively, \emph{upper}) \emph{Riesz bounds A} (respectively, \emph{B}), if, for all scalars $\{a_i\}_{i=1}^N$, we have
\[A\sum_{i=1}^N|a_i|^2 \leq \|\sum_{i=1}^Na_i\varphi_i\|^2 \leq B\sum_{i=1}^N|a_i|^2.\]
\end{definition}

Recall a result from \cite{BCPS}:
\begin{theorem}
Let $P$ be a projection on $\mathcal{H}_M$ with orthonormal basis
 $\{e_i\}_{i=1}^M$ and let $I\subset \{1,2,\ldots,M\}$.
The following are equivalent:
\begin{enumerate}
\item $\{Pe_i\}_{i\in I}$ is linearly independent.
\item $\{\left(I-P\right) e_i\}_{i\in I^c}$ spans $\left(I-P\right)\mathcal{H}$.
\end{enumerate}
\end{theorem}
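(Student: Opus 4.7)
The plan is to reduce both conditions to a single common subspace identity, namely $W + P\mathcal{H}_M = \mathcal{H}_M$, where $V := \lspan\{e_i : i \in I\}$ and $W := \lspan\{e_j : j \in I^c\} = V^\perp$. Setting $Q := I - P$, I will use the standard facts $\ker P = Q\mathcal{H}_M$ and $\ker Q = P\mathcal{H}_M$ throughout.

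First I would handle (1). The family $\{Pe_i\}_{i\in I}$ is linearly independent exactly when the restriction $P|_V$ is injective, since a relation $\sum_{i\in I} a_i Pe_i = 0$ is the same as $P(\sum_{i \in I} a_i e_i) = 0$ with $\sum_{i \in I} a_i e_i \in V$. Thus (1) is equivalent to $V \cap \ker P = V \cap Q\mathcal{H}_M = \{0\}$. Now I take orthogonal complements: $V \cap Q\mathcal{H}_M = \{0\}$ iff $V^\perp + (Q\mathcal{H}_M)^\perp = \mathcal{H}_M$, which is exactly $W + P\mathcal{H}_M = \mathcal{H}_M$.

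Next I would handle (2). The family $\{Qe_j\}_{j\in I^c}$ spans $Q\mathcal{H}_M$ iff $Q(W) = Q\mathcal{H}_M$. For any $x \in \mathcal{H}_M$, $Qx \in Q(W)$ iff there is $w \in W$ with $Q(x - w) = 0$, i.e.\ $x - w \in \ker Q = P\mathcal{H}_M$, i.e.\ $x \in W + P\mathcal{H}_M$. So $Q(W) = Q\mathcal{H}_M$ is equivalent to $W + P\mathcal{H}_M = \mathcal{H}_M$, the same identity as before. Combining the two equivalences finishes the proof.

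I do not expect any real obstacle; the statement is a piece of clean linear algebra once one notices that injectivity of $P|_V$ and surjectivity of $Q|_W$ onto $Q\mathcal{H}_M$ are dual statements with respect to the decomposition $\mathcal{H}_M = V \oplus W$ and $\mathcal{H}_M = P\mathcal{H}_M \oplus Q\mathcal{H}_M$. The only mild care needed is in taking the orthogonal complement of the intersection $V \cap Q\mathcal{H}_M$, which is valid in the finite-dimensional setting we are in, and in verifying that $W + P\mathcal{H}_M = \mathcal{H}_M$ is precisely the condition produced by the elementary preimage computation for $Q(W)$.
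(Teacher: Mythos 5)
The paper itself offers no proof of this statement: it is recalled verbatim from \cite{BCPS}, so there is no in-paper argument to compare yours against. Your proof is correct and self-contained. Both reductions are sound: writing $V=\lspan\{e_i\}_{i\in I}$ and $W=V^\perp=\lspan\{e_j\}_{j\in I^c}$ (this uses that the $e_i$ form an orthonormal basis), condition (1) is indeed equivalent to $V\cap\ker P=\{0\}$ because $\{e_i\}_{i\in I}$ is itself independent, and passing to orthogonal complements via $(A\cap B)^\perp=A^\perp+B^\perp$ (valid here since everything is finite-dimensional) together with $\ker P=(I-P)\mathcal{H}_M$ and $\left((I-P)\mathcal{H}_M\right)^\perp=P\mathcal{H}_M$ (valid because $P$ is an orthogonal, i.e.\ self-adjoint, projection) gives $W+P\mathcal{H}_M=\mathcal{H}_M$. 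The preimage computation showing that $(I-P)(W)=(I-P)\mathcal{H}_M$ is equivalent to the same identity is also correct. One could alternatively argue by a direct dimension count, $\dimm (I-P)(W)=\dimm W-\dimm(W\cap P\mathcal{H}_M)$, but your duality argument is cleaner and matches the spirit in which the paper uses the result (pairing independence of $\{Pe_i\}_{i\in I}$ with spanning of the complementary projections).
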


As a consequence we have,

\begin{corollary}
Let $P$ be a projection of rank $N$ on $\mathcal{H}_M$ with orthonormal basis $\{e_i\}_{i=1}^M$. 
The following are equivalent:

(1)  $\{Pe_i\}_{i=1}^M$ has the complement property.

(2)  Whenever we partition $\{(I-P)e_i\}_{i=1}^M$ into two sets, one of them is linearly independent.
\end{corollary}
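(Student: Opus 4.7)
The plan is to recognize this corollary as a direct translation of the preceding theorem applied twice, once to $P$ and once to $I-P$. The key observation is that the preceding theorem provides a dictionary that converts "spanning" statements about $\{Pe_i\}$ into "linear independence" statements about $\{(I-P)e_i\}$, so the complement property (a statement about spanning) can be rewritten as a statement about linear independence of complementary pieces.

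First I would apply the preceding theorem with the projection $I-P$ in place of $P$ (noting that $I-P$ is itself an orthogonal projection on $\mathcal{H}_M$). This yields the following equivalence for every $I \subset \{1,\dots,M\}$:
\begin{equation*}
\{(I-P)e_i\}_{i\in I} \text{ is linearly independent} \iff \{Pe_i\}_{i\in I^c} \text{ spans } P\mathcal{H}.
\end{equation*}

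Next I unpack the complement property. By definition, $\{Pe_i\}_{i=1}^M$ has the complement property exactly when, for every $S \subset \{1,\dots,M\}$, at least one of $\{Pe_i\}_{i\in S}$ or $\{Pe_i\}_{i\in S^c}$ spans $P\mathcal{H}$. Applying the equivalence above once with $I=S^c$ and once with $I=S$, this condition translates verbatim into: for every $S$, at least one of $\{(I-P)e_i\}_{i\in S^c}$ or $\{(I-P)e_i\}_{i\in S}$ is linearly independent. Since the ordered pairs $(S,S^c)$ are exactly the partitions of $\{1,\dots,M\}$ into two subsets, this is precisely condition (2).

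I do not foresee a genuine obstacle; the only bookkeeping to be careful about is the correct matching of index sets (the roles of $S$ and $S^c$ swap between the two sides of the dictionary) and the edge case $S=\emptyset$, which is handled by the vacuous linear independence of the empty collection (equivalently, by the fact that $\{Pe_i\}_{i=1}^M$ itself spans $P\mathcal{H}$). The whole proof is therefore a two-line application of the preceding theorem to the projection $I-P$.
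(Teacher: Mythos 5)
Your proof is correct and is exactly the derivation the paper intends: the paper offers no written proof, simply asserting the corollary "as a consequence" of the preceding theorem, and your application of that theorem with $I-P$ in place of $P$ (turning each spanning condition in the complement property into the corresponding linear-independence condition on the complementary index set) is the natural way to make that consequence explicit. The index bookkeeping and the treatment of the trivial partition are handled correctly.
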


The previous results analyze when a collection of projections and their orthogonal complements are linearly independent and when they possess the complement property. A natural next step is to see when a projection of
an orthonormal basis is full spark.

\begin{proposition}
Let $P$ be a projection of rank $N$ on $\mathcal{H}_{M}$ with orthonormal basis $\{e_i\}_{i=1}^M$. 
The following are equivalent:

(1)  $\{Pe_i\}_{i=1}^M$ is full spark.

(2)  For every $I\subset \{1,2,\ldots M\}$ with $|I|=M-N$
the vectors $\{(I-P)e_{i}\}_{i\in I}$ spans $(I-P)(\mathcal{H})$.
\end{proposition}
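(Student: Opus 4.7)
The plan is to reduce this directly to the theorem from \cite{BCPS} quoted just above, using a complementation trick on the index sets.

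First I would unpack the definition of full spark in terms of $N$-subsets. By definition, $\{Pe_i\}_{i=1}^M$ is full spark iff every subset of size exactly $N$ is linearly independent (note $P\mathcal{H}$ has dimension $N$, and any linearly independent subset of size less than $N$ can be extended to one of size $N$, so the $|J|=N$ case is what matters). Thus (1) is equivalent to: for every $J \subset \{1,2,\ldots,M\}$ with $|J| = N$, the set $\{Pe_i\}_{i\in J}$ is linearly independent.

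Next I would invoke the theorem from \cite{BCPS} with the index set $J$ playing the role of $I$ in that statement: $\{Pe_i\}_{i\in J}$ is linearly independent if and only if $\{(I-P)e_i\}_{i\in J^c}$ spans $(I-P)\mathcal{H}$. So (1) becomes: for every $J \subset \{1,\ldots,M\}$ with $|J| = N$, the set $\{(I-P)e_i\}_{i \in J^c}$ spans $(I-P)\mathcal{H}$.

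Finally, I would substitute $I := J^c$, observing that $J \mapsto J^c$ is a bijection between subsets of size $N$ and subsets of size $M-N$. This converts the condition into: for every $I \subset \{1,\ldots,M\}$ with $|I| = M-N$, $\{(I-P)e_i\}_{i\in I}$ spans $(I-P)\mathcal{H}$, which is exactly (2). There is no real obstacle here beyond keeping the two uses of the symbol $I$ (the index set versus the identity operator in $I-P$) from colliding notationally; this is a direct consequence of the preceding theorem with the index-set complement applied.
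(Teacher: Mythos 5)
Your proof is correct, but it routes through a different prior result than the paper does. The paper's own proof is a one-line citation of Proposition~\ref{cor1314} (a Parseval frame is full spark if and only if its Naimark complement is full spark): since $\{(I-P)e_i\}_{i=1}^M$ is the Naimark complement of $\{Pe_i\}_{i=1}^M$ and is a Parseval frame for the $(M-N)$-dimensional space $(I-P)\mathcal{H}$, its being full spark is exactly the statement that every $(M-N)$-subset spans, which is condition (2). You instead apply the theorem of \cite{BCPS} (independence of $\{Pe_i\}_{i\in J}$ is equivalent to $\{(I-P)e_i\}_{i\in J^c}$ spanning $(I-P)\mathcal{H}$) subset by subset, together with the bijection $J\mapsto J^c$ between $N$-subsets and $(M-N)$-subsets. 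The two arguments are essentially dual forms of the same fact---Proposition~\ref{cor1314} is in effect the ``all subsets at once'' version of the \cite{BCPS} equivalence---but yours is the more self-contained of the two, since it makes the subset-level bookkeeping explicit and relies only on the theorem stated immediately before this proposition rather than on the full-spark Naimark duality imported from \cite{CCJW}. One small wording point: to reduce full spark to the $|J|=N$ case, the relevant observation is that any index set of size less than $N$ is contained in one of size $N$ (possible since $M\ge N$) and that subsets of linearly independent sets are linearly independent; your phrasing about extending independent sets gets the direction slightly backwards but the substance is fine.
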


\begin{proof}
This follows immediately from Proposition \ref{cor1314}.\end{proof}

Instead of only considering an orthonormal basis for our projections, the next proposition slightly generalizes this idea by using Riesz bases.

\begin{proposition} 
Let $\{\varphi_i\}_{i=1}^N$ be a Riesz basis with dual basis $\{\varphi_i^*\}_{i=1}^N$ for $\mathcal{H}_N$ and let $P$ be an orthogonal projection on $\mathcal{H}_N$ of rank $M$. Let $I\subset \{1,\dots, N\}$. The following are equivalent:
\begin{enumerate}
\item $\{P\varphi_i\}_{i\in I}$ spans $P\mathcal{H}_N$
\item $\{(I-P)\varphi_i^*\}_{i\in I^c}$ is independent.
\end{enumerate}
\end{proposition}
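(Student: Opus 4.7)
The plan is to prove the contrapositives are equivalent, which means showing that $\{P\varphi_i\}_{i\in I}$ fails to span $P\mathcal{H}_N$ if and only if $\{(I-P)\varphi_i^*\}_{i\in I^c}$ is linearly dependent. The main tool is the biorthogonal expansion guaranteed by the dual basis relation $\langle \varphi_i, \varphi_j^*\rangle = \delta_{ij}$, which gives $y = \sum_j \langle y,\varphi_j\rangle\varphi_j^*$ for every $y \in \mathcal{H}_N$.

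First I would translate the failure of (1) into a statement about coefficients. If $\{P\varphi_i\}_{i\in I}$ does not span $P\mathcal{H}_N$, there is a nonzero $y \in P\mathcal{H}_N$ with $\langle y, P\varphi_i\rangle = 0$ for every $i \in I$. Since $Py = y$, this reads $\langle y,\varphi_i\rangle = 0$ for $i \in I$. Expanding $y$ in the dual basis then yields $y = \sum_{j\in I^c} a_j \varphi_j^*$ with $a_j := \langle y,\varphi_j\rangle$, and these coefficients cannot all vanish since $y\neq 0$ and $\{\varphi_j^*\}$ is a basis. The fact that $y \in P\mathcal{H}_N$ is exactly $(I-P)y = 0$, i.e.\ $\sum_{j\in I^c} a_j (I-P)\varphi_j^* = 0$, giving the desired nontrivial dependency in (2).

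For the converse, I would reverse this construction: given a nontrivial dependency $\sum_{j\in I^c} a_j(I-P)\varphi_j^* = 0$, define $y := \sum_{j\in I^c} a_j \varphi_j^*$, which is nonzero because the dual basis is linearly independent. The defining dependency says $(I-P)y = 0$, so $y\in P\mathcal{H}_N$, and biorthogonality gives $\langle y,\varphi_i\rangle = 0$ for $i\in I$, hence $\langle y, P\varphi_i\rangle = \langle Py,\varphi_i\rangle = 0$ for all $i\in I$. Thus $\{P\varphi_i\}_{i\in I}$ cannot span $P\mathcal{H}_N$.

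There is no real obstacle here: everything reduces to bookkeeping with the dual-basis identity. The only mild subtlety is keeping straight that $P\varphi_i$ lives in $P\mathcal{H}_N$ while the testing condition uses $\langle y, \varphi_i\rangle$ rather than $\langle y,P\varphi_i\rangle$, which is harmless once one notes $Py = y$. The argument is essentially the Riesz-basis analogue of the orthonormal-basis duality used in the preceding theorem from \cite{BCPS}, and indeed one could alternatively derive it by applying that theorem after a change of variables that orthonormalizes $\{\varphi_i\}$, but the direct biorthogonal argument is cleaner.
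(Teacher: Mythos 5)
Your proof is correct and follows essentially the same route as the paper: both arguments pass to the contrapositive, use the biorthogonal expansion $y=\sum_j\langle y,\varphi_j\rangle\varphi_j^*$ together with the identity $\langle y,P\varphi_i\rangle=\langle Py,\varphi_i\rangle=\langle y,\varphi_i\rangle$ for $y\in P\mathcal{H}_N$, and exchange a nonzero vector of $P\mathcal{H}_N$ orthogonal to $\{P\varphi_i\}_{i\in I}$ for a nontrivial dependency among $\{(I-P)\varphi_i^*\}_{i\in I^c}$. The only difference is organizational (you phrase it as one equivalence of negations rather than two separate contrapositive implications), so nothing further is needed.
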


\begin{proof}
(1) $\Rightarrow $ (2) (Proof by contrapositive.) Assume 
$\{(I-P)\varphi_i^*\}_{i\in I^c}$ is NOT independent. Choose $\{b_i\}_{i\in I^c}$ not all zero so that $\sum_{i \in I^c}b_i(I-P)\varphi_i^* =0$. Then $x:= \sum_{i \in I^c}b_i\varphi_i^* = \sum_{i \in I^c} b_iP\varphi_i^* \in P\mathcal{H}_N$. If $j \in I$, then 
\[\langle x, P\varphi_j\rangle = \langle Px, \varphi_j\rangle = \langle x, \varphi_j\rangle = \sum _{i \in I^c} b_i\langle \varphi_i^*, \varphi_j \rangle =0\] since $i \in I^c$ and $j \in I$. Thus $x \perp \lspan \{P\varphi_j\}_{j\in I}$ showing that $\{P\varphi_j\}_{j\in I}$ does not span $P\mathcal{H}_N$.

(2) $\Rightarrow$ (1) (Proof by contrapositive.) Assume $\lspan \{P\varphi_i\}_{i\in I} \neq P\mathcal{H}_N$. Then there exists a non-zero $x\in P\mathcal{H}_N$ with $x \perp \lspan \{P\varphi_i\}_{i\in I}$. Also $x = \sum_{i=1}^N \langle x,\varphi_i\rangle \varphi_i^*$. Now for $i \in I, \langle x, P\varphi_i\rangle = \langle Px, \varphi_i \rangle = \langle x, \varphi_i \rangle = 0$. Hence 
\[\sum_{i\in I^c}\langle x, \varphi_i \rangle \varphi_i^*=x=Px=\sum_{i \in I^c}\langle x,\varphi_i\rangle P\varphi_i^*.\] Thus, $\sum_{i\in I^c} \langle x, \varphi_i \rangle (I-P)\varphi_i^* =0$ where $\langle x, \varphi_i \rangle \neq 0$ for at least one $i \in I^c$. Therefore $\{(I-P)\varphi_i^*\}_{i\in I^c}$ is NOT independent.\end{proof}

Similarly, the following result classifies full spark projections of
Riesz bases as follows.

\begin{proposition}\label{propfsrb}
 Let $\{\varphi_i\}_{i=1}^N$ be a Riesz basis on $\mathcal{H}_N$ and $P$ an orthogonal projection on $\mathcal{H}_N$ of rank $M$. The following are equivalent:
\begin{enumerate}
\item  $\{P\varphi_i\}_{i=1}^N$ is full spark.

\item For all $I\subset \{1,2,\ldots,N\}$ with $|I|=M$ we have:
\[ \left [ \lspan\{(I-P)\varphi_i\}_{i=1}^N \right ] \cap \lspan \{\varphi_i\}_{i\in I} = \{0\}.\]
\end{enumerate}
\end{proposition}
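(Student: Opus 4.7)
The plan is to unpack condition (2) by recognizing it as a statement about the injectivity of $P$ on certain $M$-dimensional subspaces, then translate this into linear independence of $\{P\varphi_i\}_{i\in I}$.

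First I would observe that since $\{\varphi_i\}_{i=1}^N$ is a Riesz basis, it spans $\mathcal{H}_N$, so $\lspan\{(I-P)\varphi_i\}_{i=1}^N = (I-P)\mathcal{H}_N = \ker P$. Hence condition (2) can be rewritten as $\ker P \cap \lspan\{\varphi_i\}_{i\in I} = \{0\}$, which says exactly that the restriction $P|_{V_I}$ is injective, where $V_I := \lspan\{\varphi_i\}_{i\in I}$.

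Next I would perform a dimension count. Because $\{\varphi_i\}_{i=1}^N$ is a Riesz basis, every subfamily $\{\varphi_i\}_{i\in I}$ is linearly independent, so $\dimm V_I = |I| = M = \dimm P\mathcal{H}_N$. Therefore $P|_{V_I}$ is injective if and only if $P|_{V_I}:V_I \to P\mathcal{H}_N$ is a linear isomorphism, which happens if and only if the image $\{P\varphi_i\}_{i\in I}$ is a linearly independent set (equivalently, a basis of $P\mathcal{H}_N$).

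Finally I would assemble the equivalences. Full spark of $\{P\varphi_i\}_{i=1}^N$ in the $M$-dimensional space $P\mathcal{H}_N$ means precisely that $\{P\varphi_i\}_{i\in I}$ is linearly independent for every $I \subset \{1,\ldots,N\}$ with $|I| = M$. Combining with the previous step, (1) holds iff for every such $I$ the map $P|_{V_I}$ is injective, iff $V_I \cap \ker P = \{0\}$ for every such $I$, which is condition (2). I don't expect a genuine obstacle here; the only subtle point is remembering to use the Riesz basis property both to identify $\lspan\{(I-P)\varphi_i\}_{i=1}^N$ with all of $\ker P$ and to guarantee that $\dimm V_I = M$, so that injectivity of $P|_{V_I}$ is equivalent to independence of $\{P\varphi_i\}_{i\in I}$.
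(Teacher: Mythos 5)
Your proposal is correct and is essentially the paper's own argument in a slightly more conceptual packaging: the paper's chain of equivalences ($\sum_{i\in I}a_iP\varphi_i=0$ for nontrivial $\{a_i\}$ iff $\sum_{i\in I}a_i\varphi_i=\sum_{i\in I}a_i(I-P)\varphi_i$ iff the intersection is nontrivial) is exactly your observation that $\lspan\{(I-P)\varphi_i\}_{i=1}^N=\ker P$ and that dependence of $\{P\varphi_i\}_{i\in I}$ is equivalent to $\ker P\cap\lspan\{\varphi_i\}_{i\in I}\neq\{0\}$, with the Riesz basis hypothesis supplying independence of the subfamilies in both versions. No gap; your explicit identification of the span with $\ker P$ and the dimension count are the same ingredients the paper uses implicitly.
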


\begin{proof}
$\{P\varphi_i\}_{i=1}^N$ is not full spark if and only if there exists an $I\subset \{1,2,\ldots,N\}$ with $|I|=M$ and $\{a_i\}_{i \in I}$ not all zero such that $\sum_{i\in I}a_iP\varphi_i = 0$ if and only if $\sum_{i\in I}a_i(I-P)\varphi_i=\sum_{i\in I}a_i\varphi_i$ if and only if 
\[\left[\lspan  \{(I-P)\varphi_i\}_{i=1}^N \right ] \cap \lspan \{\varphi_i\}_{i\in I} \neq \{0\}.\] \end{proof}

A natural question to ask, in light of Proposition \ref{propfsrb}, is if $\{\varphi_i\}_{i=1}^N$ is a Riesz basis for $\mathcal{H}_N$ and $\{P\varphi_i\}_{i=1}^N$ is full spark on its range for some rank-$M$ projection $P$, then is $\{(I-P)\varphi_i\}_{i=1}^N$ full spark on its range? The following example shows that the answer is no.

\begin{example}
Let $\{e_1, e_1+e_2\}$ be a Riesz basis for $\mathcal{R}^2$, where $\{e_i\}_{i=1}^2$ is the standard orthonormal basis for $\mathcal{R}^2$. Let $P$ be the rank-1 projection onto $e_1$. Then $\{Pe_1,P(e_1+e_2)\}=\{e_1\}$ is full spark on its range. However, $\{(I-P)e_1, (I-P)(e_1+e_2)\}=\{0, e_2\}$ is not full spark on its range. 
\end{example}

\section{Phase Retrieval}
Phase retrieval has been a long standing problem in mathematics and engineering alike. Recently, the mathematical study of phase retrieval by subspace components (or projections) has been more deeply developed and we further study this area here. In particular, this section expands and generalizes a few results from \cite{CCPW} and \cite{CCJW} as well as develops many new results with its main theorem focused on classifying phase retrieval by projections via norm retrieval by vectors. 

\begin{definition}\label{D1}
A family of vectors $\{\varphi_i\}_{i=1}^M$ does {\bf phase retrieval} in $\mathcal{H}$ if whenever $x,y \in \mathcal{H}$ satisfy
\[ |\langle x,\varphi_i\rangle|=|\langle y,\varphi_i\rangle|,
\mbox{ for all }i=1,2,\ldots,M,\]
then $x=cy$ for some $|c|=1$.

A family of projections $\{P_i\}_{i=1}^M$ does {\bf phase retrieval} in $\mathcal{H}$ if whenever $x,y\in \mathcal{H}$ satisfy
\[ \|P_ix\|=\|P_iy\|,\mbox{ for all }i=1,2,\ldots,M,\]
then $x=cy$ for some $|c|=1$.

If $\{W_i\}_{i=1}^M$ is a family of subspaces of $\mathcal{H}$, we say
they do {\bf phase retrieval} if the orthogonal projections onto the $W_i$ do phase retrieval.
\end{definition}

One of the main results in \cite{Balan06} is the following.

\begin{theorem}
A family of vectors $\Phi$ in $R^N$ does phase retrieval if and only if $\Phi$ has the complement property.  In the complex case, phase retrieval implies complement property
but the converse fails.
\end{theorem}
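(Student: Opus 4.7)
The plan is to handle necessity (phase retrieval implies complement property) simultaneously in both the real and complex cases by contrapositive, then prove sufficiency over $\mathcal{R}$ by the standard $\pm$ factorization trick, and finally produce an explicit $\mathcal{C}^2$ counterexample to sufficiency in the complex setting.

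For necessity, suppose the complement property fails: there is $S\subset\{1,\ldots,M\}$ with neither $\lspan\{\varphi_i\}_{i\in S}$ nor $\lspan\{\varphi_i\}_{i\in S^c}$ equal to $\mathcal{H}$. Pick nonzero $x\perp\{\varphi_i\}_{i\in S}$ and nonzero $y\perp\{\varphi_i\}_{i\in S^c}$. Since $\Phi$ is a frame it spans $\mathcal{H}$, so no nonzero vector is orthogonal to all $\varphi_i$; in particular $y$ cannot be a scalar multiple of $x$, so $\{x,y\}$ is linearly independent. Set $u=x+y$ and $v=x-y$. For $i\in S$, $\langle u,\varphi_i\rangle=\langle y,\varphi_i\rangle=-\langle v,\varphi_i\rangle$, and for $i\in S^c$, $\langle u,\varphi_i\rangle=\langle x,\varphi_i\rangle=\langle v,\varphi_i\rangle$; thus $|\langle u,\varphi_i\rangle|=|\langle v,\varphi_i\rangle|$ for every $i$. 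If $u=cv$ for some scalar $c$ with $|c|=1$, then $(1-c)x+(1+c)y=0$, which by linear independence forces $c=1$ and $c=-1$ simultaneously, a contradiction. Hence $\Phi$ does not do phase retrieval.

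For sufficiency in the real case, suppose $\Phi$ has the complement property and $|\langle x,\varphi_i\rangle|=|\langle y,\varphi_i\rangle|$ for all $i$. Over $\mathcal{R}$ this means $\langle x-y,\varphi_i\rangle\,\langle x+y,\varphi_i\rangle=0$ for each $i$, so the index set partitions as $S\cup S^c$ with $x-y\perp\{\varphi_i\}_{i\in S}$ and $x+y\perp\{\varphi_i\}_{i\in S^c}$. The complement property guarantees that one of these two subsets spans $\mathcal{H}_N$, which forces $x-y=0$ or $x+y=0$, i.e.\ $x=\pm y$, as required.

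Finally, to show the converse fails over $\mathcal{C}$, I would exhibit $\Phi=\{e_1,e_2,e_1+e_2\}\subset\mathcal{C}^2$. Any two of these vectors span $\mathcal{C}^2$, so $\Phi$ trivially has the complement property. But taking $x=(1,i)$ and $y=(1,-i)$, one checks $|\langle x,e_1\rangle|=|\langle y,e_1\rangle|=1$, $|\langle x,e_2\rangle|=|\langle y,e_2\rangle|=1$, and $|\langle x,e_1+e_2\rangle|=|1+i|=\sqrt{2}=|1-i|=|\langle y,e_1+e_2\rangle|$, while $x$ is not a unimodular multiple of $y$. The main subtlety in the whole argument is the linear independence step in the complex necessity proof—in the real case one only needs $u\neq\pm v$, but over $\mathcal{C}$ one has a full circle of unimodular scalars to rule out, which is exactly what the spanning property of a frame provides.
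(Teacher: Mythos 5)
Your argument is correct: the contrapositive construction $u=x+y$, $v=x-y$ for necessity, the factorization $\langle x-y,\varphi_i\rangle\langle x+y,\varphi_i\rangle=0$ for real sufficiency, and the $\{e_1,e_2,e_1+e_2\}$ counterexample with $x=(1,i)$, $y=(1,-i)$ all check out. The paper itself states this theorem without proof, citing \cite{Balan06}; your proof is essentially the standard argument from that reference, so there is nothing to compare beyond noting that the linear-independence step you flag (ruling out all unimodular $c$ at once) is indeed the only point where the complex necessity case needs more care than the real one.
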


In \cite{CCPW}, the authors analyzed phase retrieval by projections and provided the following results. 

\begin{theorem} 
We have:
\begin{enumerate}
\item Phase retrieval in $\mathcal{R}^N$ is possible using $2N-1$ subspaces, each of any dimension less than $N$.

\item Phase retrieval in $\mathcal{C}^N$ is possible using $4N-3$ subspaces, each of any dimension less than $N$.
\end{enumerate}
\end{theorem}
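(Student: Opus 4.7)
My plan is to reformulate projection phase retrieval via a polarization identity and then extend a known vector phase retrieval frame to subspaces of the prescribed dimensions. In the real setting,
\[
\|P_ix\|^2 - \|P_iy\|^2 = \langle P_i(x-y), x+y\rangle,
\]
so with $u = x-y$ and $v = x+y$ the equations $\|P_ix\|=\|P_iy\|$ for all $i$ are equivalent to $\langle P_iu, v\rangle = 0$ for all $i$, while $x = \pm y$ corresponds to $u=0$ or $v=0$. Hence $\{W_i\}_{i=1}^{2N-1}$ does real projection phase retrieval exactly when $\lspan\{P_iu : 1 \le i \le 2N-1\} = \mathcal{R}^N$ for every nonzero $u\in\mathcal{R}^N$. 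This is the spanning condition I aim to enforce.

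To build the $W_i$, I would start with a unit-norm tight phase retrieval frame $\{\varphi_i\}_{i=1}^{2N-1}$ in $\mathcal{R}^N$ (equivalently, a full spark unit-norm tight frame, which is achievable in the real case). For each $i$ I would take $W_i \supseteq \lspan\{\varphi_i\}$ of the prescribed dimension $k_i$. A clean special case, which serves as my anchor, is $k_i = N-1$ for all $i$: take $W_i = \varphi_i^\perp$, so that $\|P_ix\|^2 = \|x\|^2 - \langle x,\varphi_i\rangle^2$. Summing the hypotheses over $i$ and invoking the tight-frame identity forces $\|x\|=\|y\|$, after which the projection equations reduce to $|\langle x,\varphi_i\rangle| = |\langle y,\varphi_i\rangle|$ for every $i$, and the phase retrieval property of $\{\varphi_i\}$ yields $x = \pm y$.

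For arbitrary mixed dimensions $1 \le k_i \le N-1$, I would pass to a genericity argument. The parameter space $\prod_i \mathrm{Gr}_{k_i-1}(\varphi_i^\perp)$ of admissible extensions $W_i = \lspan\{\varphi_i\} \oplus V_i$ is irreducible, and the set of tuples failing the spanning criterion cuts out a proper Zariski-closed subset as soon as one exhibits a single good tuple for the given dimension profile. The codimension-one example above provides such a tuple when all $k_i=N-1$; for other profiles I would try to deform one coordinate at a time, shrinking or enlarging a single $W_i$ by one dimension while preserving the spanning-for-all-$u$ property, essentially moving through the Grassmannians in a connected way.

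The complex case would follow the same template, with $4N-3$ replacing $2N-1$ and starting from a complex vector phase retrieval frame. The added difficulty is that the real polarization identity only yields $\mathrm{Re}\,\langle P_iu, v\rangle = 0$, so one must upgrade to a fully complex-linear spanning condition using the complex phase retrieval of the underlying vector frame. The main obstacle I anticipate throughout is the deformation step: the explicit base examples are of uniform-dimension type, so verifying that the bad variety remains proper across every profile $(k_1,\ldots,k_{2N-1}) \in \{1,\ldots,N-1\}^{2N-1}$ is the delicate part.
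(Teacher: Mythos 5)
First, a point of reference: the paper does not prove this theorem. It is quoted as background from \cite{CCPW}, so there is no in-paper argument to compare yours against; what follows is an assessment of your proposal on its own terms.

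Your real-case reduction is correct and is the standard starting point: over $\mathcal{R}$, $\|P_ix\|^2-\|P_iy\|^2=\langle P_i(x-y),x+y\rangle$, so $\{W_i\}_{i=1}^{2N-1}$ does phase retrieval iff $\lspan\{P_iu\}_{i=1}^{2N-1}=\mathcal{R}^N$ for every $u\neq 0$; and your hyperplane anchor (a full spark unit-norm tight frame $\{\varphi_i\}_{i=1}^{2N-1}$ with $W_i=\varphi_i^{\perp}$, using tightness to recover $\|x\|=\|y\|$ and then reducing to vector phase retrieval) is a complete proof for the uniform profile $k_1=\cdots=k_{2N-1}=N-1$. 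The genuine gap is exactly where you flag it, and it is not a technicality: the theorem asserts existence for \emph{every} profile $(k_1,\ldots,k_{2N-1})\in\{1,\ldots,N-1\}^{2N-1}$, and the Zariski-genericity argument you invoke requires a witness of each exact profile before the bad locus in $\prod_i\mathrm{Gr}_{k_i}(\mathcal{R}^N)$ can be declared proper. Your two witnesses (all $k_i=N-1$, and implicitly all $k_i=1$) do not supply the others, and the proposed ``deform one coordinate at a time'' is precisely the missing lemma: one must show that a phase-retrieving tuple can have a single $W_j$ shrunk or grown by one dimension while preserving the spanning condition \emph{simultaneously for all} $u\neq 0$. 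For each fixed $u$ the bad perturbations form a proper subvariety, but you need the union over the whole sphere of $u$'s to remain proper, which requires a projective-closure or dimension-count argument you have not written. (There is also a small internal inconsistency: you announce $W_i\supseteq\lspan\{\varphi_i\}$ but your anchor is $W_i=\varphi_i^{\perp}$, which does not contain $\varphi_i$, so the two halves of the plan do not yet connect.)

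The complex case has a second, independent gap. As you note, polarization only yields $\mathrm{Re}\,\langle P_iu,x+y\rangle=0$, a real-linear condition, so the clean ``$\{P_iu\}$ spans for all $u\neq0$'' characterization is simply unavailable over $\mathcal{C}$ — indeed, the paper itself records that in the complex case the complement property (and hence spanning-type conditions) is necessary but not sufficient for phase retrieval. Your hyperplane anchor does survive: with a tight frame $\{\varphi_i\}_{i=1}^{4N-3}$ doing complex phase retrieval, $W_i=\varphi_i^{\perp}$ gives $\|P_ix\|^2=\|x\|^2-|\langle x,\varphi_i\rangle|^2$, tightness recovers $\|x\|=\|y\|$, and the vector result finishes. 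But the passage to arbitrary dimensions cannot be routed through the real spanning criterion at all, so for $\mathcal{C}^N$ you are missing not just a witness per profile but the basic mechanism by which genericity would be brought to bear. As it stands, the proposal proves both statements only in the codimension-one (and dimension-one) cases.
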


Also, recall the following result from \cite{CCJW}.
\begin{proposition}\label{prop5.4}
If $\{\varphi_i\}_{i=1}^M$ is a frame in $\mathcal{H}_N$ which allows phase retrieval then $\{P\varphi_i\}_{i=1}^M$ allows phase retrieval for all orthogonal projections $P$ on $\mathcal{H}_N$.
\end{proposition}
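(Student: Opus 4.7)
The plan is to reduce phase retrieval for $\{P\varphi_i\}_{i=1}^M$ on the range $P\mathcal{H}_N$ directly to the hypothesis that $\{\varphi_i\}_{i=1}^M$ does phase retrieval on $\mathcal{H}_N$, by exploiting self-adjointness of $P$.

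First I would fix the target space: since each $P\varphi_i$ lies in $P\mathcal{H}_N$, the natural statement is that $\{P\varphi_i\}_{i=1}^M$ does phase retrieval on $P\mathcal{H}_N$. So take arbitrary $x,y\in P\mathcal{H}_N$ (equivalently, $Px=x$ and $Py=y$) satisfying
\[
|\langle x, P\varphi_i\rangle| \;=\; |\langle y, P\varphi_i\rangle| \qquad \text{for all } i=1,\dots,M.
\]
The goal is to conclude $x=cy$ for some scalar $c$ with $|c|=1$.

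The key step is the self-adjointness identity $\langle x, P\varphi_i\rangle = \langle Px, \varphi_i\rangle = \langle x, \varphi_i\rangle$, and analogously for $y$. This transforms the displayed equalities into
\[
|\langle x, \varphi_i\rangle| \;=\; |\langle y, \varphi_i\rangle| \qquad \text{for all } i=1,\dots,M,
\]
now interpreted against the original frame $\{\varphi_i\}_{i=1}^M$ in $\mathcal{H}_N$. Since $\{\varphi_i\}_{i=1}^M$ does phase retrieval on $\mathcal{H}_N$ by hypothesis, Definition \ref{D1} yields $x=cy$ with $|c|=1$, which is exactly what is required.

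There is essentially no obstacle: the only content is recognizing that $P$ being an orthogonal (hence self-adjoint) projection together with $x,y\in P\mathcal{H}_N$ collapses the inner products $\langle \cdot,P\varphi_i\rangle$ to $\langle \cdot,\varphi_i\rangle$. The proof takes only a few lines and does not need any of the heavier machinery (complement property, spark, Naimark) developed earlier in the paper.
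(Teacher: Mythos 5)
Your proof is correct: the self-adjointness identity $\langle x,P\varphi_i\rangle=\langle Px,\varphi_i\rangle=\langle x,\varphi_i\rangle$ for $x,y$ in the range of $P$ reduces the claim immediately to the phase retrieval hypothesis on $\{\varphi_i\}_{i=1}^M$, and your reading of the conclusion as phase retrieval on $P\mathcal{H}_N$ is the intended one. The paper itself gives no proof of this proposition (it is recalled from \cite{CCJW} without argument), so there is nothing to compare against; your few-line argument is the standard and complete one.
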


Generalizing Proposition \ref{prop5.4} to subspaces leads to the following result.

\begin{proposition}\label{onb}
If $\{e_i\}_{i=1}^{2N}$ is an orthonormal basis for $\mathcal{H}_{2N}$, then for all $k \leq N$ there exists a projection of $\mathcal{H}_{2N}$ onto a $k$-dimensional subspace $W$ such that $\{Pe_i\}_{i=1}^{2N}$ does phase retrieval on its range.
\end{proposition}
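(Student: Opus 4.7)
The plan is to apply Naimark's Theorem (Theorem~\ref{naimark}) in reverse to convert the problem into producing a phase-retrieval Parseval frame of $2N$ vectors in $\mathcal{H}_k$, and then to transport the construction back to the given orthonormal basis via unitary equivalence of $2N$-dimensional Hilbert spaces. To set this up, suppose we have produced a Parseval frame $\{\varphi_i\}_{i=1}^{2N}$ for $\mathcal{H}_k$ that does phase retrieval. By the second form of Naimark's Theorem, there exist a $2N$-dimensional Hilbert space $\mathcal{K}$, an orthonormal basis $\{f_i\}_{i=1}^{2N}$ of $\mathcal{K}$, and an orthogonal projection $Q\colon \mathcal{K}\to\mathcal{H}_k$ with $Qf_i=\varphi_i$. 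Choose a unitary $U\colon \mathcal{K}\to\mathcal{H}_{2N}$ with $Uf_i=e_i$, and set $W=U(\mathcal{H}_k)$ and $P=UQU^{*}$, the orthogonal projection onto $W$. Then $Pe_i=UQU^{*}e_i=UQf_i=U\varphi_i$, and since unitary images of phase-retrieval families are immediately phase-retrieval families (straight from Definition~\ref{D1}), $\{Pe_i\}_{i=1}^{2N}=\{U\varphi_i\}_{i=1}^{2N}$ does phase retrieval on the $k$-dimensional subspace $W$.

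It therefore remains to produce a phase-retrieval Parseval frame of $2N$ vectors in $\mathcal{H}_k$ for each $k\le N$. In the real case this is the heart of the argument but is relatively painless: because $k\le N$ gives $2N\ge 2k-1$, any full-spark Parseval frame of $2N$ vectors in $\mathcal{R}^k$ suffices, since by part~(1) of the remark following the definition of spark, full spark together with $2N\ge 2k-1$ forces the complement property, and the Balan--Casazza--Edidin theorem then yields phase retrieval. Existence of such a full-spark Parseval frame is known, for example from the constructions in \cite{CCJW}, or by a generic argument on the Grassmannian of $k$-planes in $\mathcal{R}^{2N}$ (Proposition~\ref{cor1314} guarantees that generically the projected basis is full spark since the Naimark complement of a generic subspace is full spark). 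The main obstacle of the proof is precisely this existence step: in the complex case, generic phase retrieval in $\mathcal{C}^k$ requires roughly $4k-4$ vectors, while we have only $2N$, so the argument above via full spark no longer works across the whole range $k\le N$; one would then invoke the specific constructions of Bodmann~\cite{BN} or Vinzant~\cite{V} on the admissible parameter range, rather than a generic argument.
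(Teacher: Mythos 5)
Your proof is correct and takes essentially the same route as the paper: the paper also starts from a Parseval frame $\{g_i\}_{i=1}^{2N}$ doing phase retrieval in a $k$-dimensional space and uses its isometric analysis operator $T$, so that by Naimark's Theorem $Tg_i=Pe_i$ for the projection $P$ onto $\mbox{range}(T)$; your unitary $U$ merely makes explicit the identification the paper performs under its announced ``abuse of notation.'' The one point where you are more careful is the existence step: the paper just says ``choose any Parseval frame which does phase retrieval,'' whereas you justify this in the real case via full spark together with $2N\ge 2k-1$, and your caveat about the complex case is a genuine issue rather than a technicality --- for $k$ close to $N$ the Heinosaari--Mazzarella--Wolf bound $M\ge 4k-4-2\alpha$ excludes $2N$ vectors doing phase retrieval in $\mathcal{C}^k$, so the proposition (and the paper's own proof) should really be read as a statement about the real case.
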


\begin{proof}
Without loss of generality assume $k=N$, choose any Parseval frame $\{g_i\}_{i=1}^{2N}$ which does phase retrieval for an $N$-dimensional Hilbert space $K_N$. Let $T$ be its analysis operator, then $T$ is an isometry and $T:K_N \rightarrow \ell_2^{2N}$ with $Tg_i=Pe_i$ for all $i=1,\dots, 2N$. Thus $P$ is a projection onto the range of $T$, a $k$-dimensional space, and $\{Pe_i\}_{i=1}^{2N}=\{Tg_i\}_{i=1}^{2N}$ gives phase retrieval.\end{proof}

We can further generalize Proposition \ref{onb} to Riesz bases as follows.

\begin{proposition}\label{proprb}
If $\Phi=\{\varphi_i\}_{i=1}^M$ is a Riesz basis for $\mathcal{H}_M$ and $N\leq M$ then there exists a projection $P$ on $\mathcal{H}_M$ of rank $N$ so that $\{P\varphi_i\}_{i=1}^M$ is full spark on its range. Hence, if $2N-1 \leq M$ then $\{P\varphi_i\}_{i=1}^M$ yields phase retrieval on its range.
\end{proposition}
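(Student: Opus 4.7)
The plan is to translate full spark into a transversality condition via Proposition \ref{propfsrb}, then produce $P$ by an elementary ``finite union of proper subspaces'' argument, and finally invoke the complement property theorem for the phase retrieval conclusion.

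Since $\{\varphi_i\}_{i=1}^M$ spans $\mathcal{H}_M$, we have $\lspan\{(I-P)\varphi_i\}_{i=1}^M = (I-P)\mathcal{H}_M = \ker P$. Applying Proposition \ref{propfsrb} (swapping the roles of the symbols $M$ and $N$ to match the present setting), $\{P\varphi_i\}_{i=1}^M$ is full spark on $\mathrm{range}(P)$ if and only if
\[ \ker P \cap V_I = \{0\} \quad \text{for every } I \subset \{1,\ldots,M\} \text{ with } |I|=N, \]
where $V_I := \lspan\{\varphi_i\}_{i\in I}$. Each $V_I$ is exactly $N$-dimensional because a Riesz basis is linearly independent.

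To exhibit such a $P$, I would construct $W := \ker P$, an $(M-N)$-dimensional subspace, one vector at a time. For $j=1,\ldots,M-N$, choose $w_j$ outside every subspace of the form $V_I + \lspan\{w_1,\ldots,w_{j-1}\}$; each such subspace has dimension at most $N+(j-1) < M$, and there are only $\binom{M}{N}$ index sets $I$, so this requires avoiding a finite union of proper subspaces of $\mathcal{H}_M$. Over any infinite field such a $w_j$ exists. A short induction then shows $W := \lspan\{w_1,\ldots,w_{M-N}\}$ satisfies $W\cap V_I = \{0\}$ for every $I$: if $\sum_{k=1}^{j_0} c_k w_k \in V_I$ is nonzero with $c_{j_0} \neq 0$ the last nonzero coefficient, then $w_{j_0} \in V_I + \lspan\{w_1,\ldots,w_{j_0-1}\}$, contradicting the choice of $w_{j_0}$. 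Setting $P$ to be the orthogonal projection onto $W^\perp$ yields a rank-$N$ projection satisfying the required condition. The edge case $N=M$ is trivial with $P=I$, since any Riesz basis of $M$ vectors in $\mathcal{H}_M$ is automatically full spark.

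For the phase retrieval conclusion, when $M \geq 2N-1$ the remark following the definition of spark shows that $\{P\varphi_i\}_{i=1}^M$, being full spark, has the complement property in $\mathrm{range}(P) \cong \mathcal{H}_N$. The Balan--Casazza--Edidin theorem recalled at the start of Section~3 then yields phase retrieval (in the real case). The only real obstacle here is the inductive construction of $W$, which reduces to the standard fact that a vector space over an infinite field cannot be written as a finite union of proper subspaces; no further machinery beyond Proposition \ref{propfsrb} and the complement-property characterization is needed.
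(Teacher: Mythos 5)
Your proof is correct, but it takes a genuinely different route from the paper's. The paper reduces to the orthonormal case: it applies $S^{-1/2}$ (with $S$ the frame operator of $\Phi$) to turn the Riesz basis into an orthonormal basis, invokes the existence of a rank-$N$ projection $P$ making $\{PS^{-1/2}\varphi_i\}_{i=1}^M$ full spark, and then transports the transversality condition of Proposition \ref{propfsrb} back through $S^{1/2}$ by setting $W=S^{1/2}(I-P)\mathcal{H}_M$ and taking the projection with kernel $W$. You instead build the kernel directly: after translating full spark into the condition $\ker P\cap\lspan\{\varphi_i\}_{i\in I}=\{0\}$ for all $|I|=N$ via Proposition \ref{propfsrb}, you construct an $(M-N)$-dimensional $W$ transverse to all $\binom{M}{N}$ subspaces $V_I$ by iteratively avoiding a finite union of proper subspaces, and your dimension count ($N+(j-1)<M$) and the ``last nonzero coefficient'' induction are both sound. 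What your approach buys is self-containment: the paper's step asserting the existence of a full-spark projection of an orthonormal basis is left to outside results (full spark Parseval frames via Naimark), whereas your avoidance argument proves that existence as a special case and works over any infinite field; what the paper's approach buys is that it exhibits the mechanism (conjugation by $S^{\pm 1/2}$) connecting the Riesz-basis case to the already-understood orthonormal case. Both arguments share the same caveat on the final claim --- full spark gives the complement property when $M\ge 2N-1$, which yields phase retrieval only in the real case --- and you correctly flag this, matching the paper's own (implicitly real) conclusion.
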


\begin{proof}
Let $S$ be the frame operator for $\Phi$. Then $\{S^{-\frac{1}{2}}\varphi_i\}_{i=1}^M$ is an orthonormal basis for $\mathcal{H}_M$. Hence, there exists a projection $P$ of rank $N$ such that $\{PS^{-\frac{1}{2}}\varphi_i\}_{i=1}^M$ is full spark. Thus by Proposition \ref{propfsrb}, $(I-P)\mathcal{H}_M \cap \lspan\{S^{-\frac{1}{2}}\varphi_i\}_{i\in I} = \{0\}$ for all $I\subset \{1,\dots, M\}$ with $|I|=N$. 

Let $W=S^{\frac{1}{2}}(I-P)\mathcal{H}_M$ and suppose $x \in W \cap \lspan\{\varphi_i\}_{i\in I}$. Then $x = S^{\frac{1}{2}}(I-P)y=\sum_{i\in I}b_i\varphi_i$ for some $ y\in \mathcal{H}_M$, and $b_i\in H$. So 
\[S^{-\frac{1}{2}}x=(I-P)y=\sum_{i\in I}b_iS^{-\frac{1}{2}}\varphi_i \in (I-P)\mathcal{H}_M \cap \lspan\{S^{-\frac{1}{2}}\varphi_i\}_{i\in I}.\] 
Hence \[W \cap \lspan\{\varphi_i\}_{i\in I}=S^{\frac{1}{2}}(I-P)\mathcal{H}_M\cap \lspan\{\varphi_i\}_{i\in I}=\{0\}.\]
Now let $Q$ be the projection onto $W$. Then $\{(I-Q)\varphi_i\}_{i=1}^M = \{P\varphi_i\}_{i=1}^M$ is full spark, by Proposition \ref{propfsrb}.
When $2N-1\leq M$ then $\{(I-Q)\varphi_i\}_{i=1}^M = \{P\varphi_i\}_{i=1}^M$ is full spark and thus has the complement property. Therefore $\{P\varphi_i\}_{i=1}^M$ yields phase retrieval. \end{proof}

\begin{remark}
The set of vectors $\{e_1,e_2,e_3,e_1+e_2,e_1+e_3,e_2+e_3\}$ have the property that it does phase retrieval but no full spark subset does phase retrieval.
\end{remark}

\begin{corollary}
If $\{\varphi_i\}_{i=1}^M$ is a Riesz basis for $\mathcal{H}_M$ with dual Riesz basis $\{\varphi_i^*\}_{i=1}^M$ and $2N-1\leq M \leq 2N+1$, then there exists a projection $P$ of rank $N$ so that both of the following hold:
\begin{enumerate}
\item $\{P\varphi_i\}_{i=1}^M$ does phase retrieval on its range, and
\item $\{(I-P)\varphi_i^*\}_{i=1}^M$ does phase retrieval on its range. 
\end{enumerate}
\end{corollary}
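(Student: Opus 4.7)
The plan is to reduce both phase-retrieval conclusions to full-spark conditions and then produce a single rank-$N$ projection $P$ that realizes both. Since $\dim P\mathcal{H}_M=N$ and $M\ge 2N-1$, full spark of $\{P\varphi_i\}_{i=1}^M$ on $P\mathcal{H}_M$ gives the complement property and hence phase retrieval there. Symmetrically, $\dim(I-P)\mathcal{H}_M=M-N$, and the upper bound $M\le 2N+1$ rewrites as $M\ge 2(M-N)-1$, so full spark of $\{(I-P)\varphi_i^*\}_{i=1}^M$ on $(I-P)\mathcal{H}_M$ yields phase retrieval on that subspace. Thus it suffices to find one rank-$N$ projection $P$ making both families full spark on their ranges.

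Next I would rephrase both full-spark requirements via Proposition~\ref{propfsrb}. Applied to the Riesz basis $\{\varphi_i\}$ and projection $P$, it says $\{P\varphi_i\}_{i=1}^M$ is full spark if and only if
\[(I-P)\mathcal{H}_M\,\cap\,\lspan\{\varphi_i\}_{i\in J}=\{0\}\quad\text{for every }J\subset[M]\text{ with }|J|=N.\]
Applied to the Riesz basis $\{\varphi_i^*\}$ and the rank-$(M-N)$ projection $I-P$, it says $\{(I-P)\varphi_i^*\}_{i=1}^M$ is full spark if and only if
\[P\mathcal{H}_M\,\cap\,\lspan\{\varphi_i^*\}_{i\in I}=\{0\}\quad\text{for every }I\subset[M]\text{ with }|I|=M-N.\]
The bridge between the two conditions is the biorthogonality relation $\langle\varphi_i,\varphi_j^*\rangle=\delta_{ij}$, which forces $\lspan\{\varphi_i^*\}_{i\in J^c}\subseteq(\lspan\{\varphi_i\}_{i\in J})^\perp$; since both sides have dimension $M-N$, this inclusion is an equality.

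Finally, I would observe that the first condition is really an algebraic direct-sum decomposition: because $\dim(I-P)\mathcal{H}_M+\dim\lspan\{\varphi_i\}_{i\in J}=M$, the trivial-intersection statement is equivalent to $(I-P)\mathcal{H}_M\oplus\lspan\{\varphi_i\}_{i\in J}=\mathcal{H}_M$. Taking orthogonal complements of each summand yields the complementary direct-sum decomposition $P\mathcal{H}_M\oplus(\lspan\{\varphi_i\}_{i\in J})^\perp=\mathcal{H}_M$, and after the identification from the previous paragraph (with $I=J^c$) this is exactly the second condition. Hence the two full-spark requirements are equivalent for any single $P$, so applying Proposition~\ref{proprb} to $\{\varphi_i\}$ with the given $N$ produces a projection satisfying both simultaneously, completing the proof.

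The main obstacle I anticipate is spotting this duality. A natural first attempt would be to invoke Proposition~\ref{proprb} twice---once for $\{\varphi_i\}$ and once for $\{\varphi_i^*\}$---and then try to reconcile the two projections it supplies, which looks hopeless. The clean resolution is that biorthogonality together with orthogonal-complement duality collapses the two conditions into a single statement about a single $P$, so there is no genuine compatibility problem to solve; the bounds $2N-1\le M\le 2N+1$ are used only to promote full spark to phase retrieval on each of the two ranges.
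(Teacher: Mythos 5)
Your proof is correct and follows the route the paper intends: the paper's entire proof is the single sentence ``The proof follows from Proposition \ref{proprb},'' and your argument supplies precisely the duality step that citation leaves implicit --- namely that biorthogonality gives $\lspan\{\varphi_i^*\}_{i\in J^c}=(\lspan\{\varphi_i\}_{i\in J})^\perp$, and since $U\oplus V=\mathcal{H}_M$ forces $U^\perp\oplus V^\perp=\mathcal{H}_M$ (as $U^\perp\cap V^\perp=(U+V)^\perp=\{0\}$ with matching dimension count), the full-spark condition of Proposition \ref{propfsrb} for $\{P\varphi_i\}_{i=1}^M$ is equivalent to the one for $\{(I-P)\varphi_i^*\}_{i=1}^M$, so the single projection from Proposition \ref{proprb} handles both conclusions. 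Your bookkeeping is also right, including reading $M\le 2N+1$ as $M\ge 2(M-N)-1$ so that full spark yields the complement property on the $(M-N)$-dimensional range of $I-P$ (with the same implicit restriction to the real case that the paper itself relies on when passing from the complement property to phase retrieval).
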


\begin{proof}
The proof follows from Proposition \ref{proprb}.
\end{proof}

In \cite{CCPW}, the authors provided necessary and sufficient conditions for when subspaces do phase retrieval by relating them to the one dimensional case.  Their results were proven for the real case.  The complex case is more
technical so we will prove this here. To accomplish this, we give a few preliminary results. Note that the following results hold in both the real and complex case.

\begin{lemma}\label{pete1}
Let $\{W_i\}_{i=1}^M$ be subspaces of $\mathcal{H}_{N}$ allowing
phase retrieval.  For every orthonormal basis $\{\varphi_{i,j}\}_{j=1}^{J_i}$ of $W_i$, the set $\Phi=\{\varphi_{i,j}\}_{i=1,j=1}^{M,J_i}$ allows phase retrieval in $\mathcal{H}_N$.
\end{lemma}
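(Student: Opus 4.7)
The plan is to reduce the hypothesis that $\Phi$ does phase retrieval directly to the projection phase retrieval hypothesis on the $W_i$, using the basic identity that the squared norm of a projection onto a subspace equals the sum of squared moduli of the coefficients in any orthonormal basis of that subspace.

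Let $P_i$ denote the orthogonal projection onto $W_i$. The key identity I would invoke is that, since $\{\varphi_{i,j}\}_{j=1}^{J_i}$ is an orthonormal basis of $W_i$,
\[
\|P_i x\|^2 \;=\; \sum_{j=1}^{J_i} |\langle x, \varphi_{i,j}\rangle|^2
\quad\text{for every } x\in\mathcal{H}_N.
\]
This is just Parseval's identity inside $W_i$ applied to $P_i x$, together with $\langle P_i x,\varphi_{i,j}\rangle=\langle x,\varphi_{i,j}\rangle$ since $\varphi_{i,j}\in W_i$.

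Now suppose $x,y\in\mathcal{H}_N$ satisfy $|\langle x,\varphi_{i,j}\rangle|=|\langle y,\varphi_{i,j}\rangle|$ for every $i\in[M]$ and every $j\in[J_i]$. Squaring and summing over $j$ for fixed $i$, the identity above yields $\|P_i x\|^2=\|P_i y\|^2$, hence $\|P_i x\|=\|P_i y\|$ for each $i=1,\dots,M$. Since $\{W_i\}_{i=1}^M$ allows phase retrieval, Definition \ref{D1} gives $x=cy$ for some scalar $c$ with $|c|=1$, which is exactly what is needed to conclude that $\Phi$ allows phase retrieval.

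There is no real obstacle here: the statement is essentially a one-line consequence of Parseval's identity in each $W_i$, and it holds identically in the real and complex settings since the modulus $|\cdot|$ and the absolute squares are the right quantities on both sides. The only small point worth making explicit in the write-up is the equality $\langle P_i x,\varphi_{i,j}\rangle=\langle x,\varphi_{i,j}\rangle$ which justifies expanding $\|P_i x\|^2$ in the basis of $W_i$ rather than of $\mathcal{H}_N$.
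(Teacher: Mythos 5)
Your proposal is correct and follows exactly the same route as the paper's proof: expand $\|P_i x\|^2$ via Parseval's identity in the orthonormal basis of $W_i$ to deduce $\|P_i x\|=\|P_i y\|$ for each $i$, then invoke the phase retrieval hypothesis on the subspaces. The only difference is that you make explicit the step $\langle P_i x,\varphi_{i,j}\rangle=\langle x,\varphi_{i,j}\rangle$, which the paper leaves implicit.
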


\begin{proof}
Let $P_i$ be the orthogonal projection onto $W_i$ for each $i=\{1,\dots, M\}$. For every $i=1,2,\ldots,M$ and for any $x,y \in \mathcal{H}_N$ such that $|\langle x,\varphi_{i,j}\rangle|= |\langle y, \varphi_{i,j}\rangle|$ for all $j=\{1,\dots, J_i\}$, we have
\[ \|P_ix\|^2 = \sum_{j=1}^{J_i}|\langle x,\varphi_{i,j} \rangle|^2
= \sum_{j=1}^{J_i}|\langle y,\varphi_{i,j} \rangle|^2
= \|P_iy\|^2.\]
Since $\{W_i\}_{i=1}^M$ allows phase retrieval it follows
that $x=cy$ for some $c\in \mathcal{C}$ with $|c|=1$. Hence, $\Phi$ allows phase retrieval.
\end{proof}

\begin{lemma}\label{pete2}
Let $P$ be a projection onto an $M$-dimensional subspace, $W$,
of $\mathcal{H}_N$.  Given $x,y \in \mathcal{H}_N$, the following are
equivalent:
\begin{enumerate}
\item $\|Px\|=\|Py\|$.
\item There exists an orthonormal basis 
$\{\varphi_i\}_{i=1}^M$ for $W$ such that $|\langle x,
\varphi_i\rangle|=|\langle y,\varphi_i\rangle|$, for all
$i=1,2,\ldots,M$.
\end{enumerate} 
\end{lemma}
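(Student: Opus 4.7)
The direction $(2)\Rightarrow(1)$ is the easy one: if such an orthonormal basis $\{\varphi_i\}_{i=1}^M$ of $W$ exists, then since each $\varphi_i\in W$ we have $\langle x,\varphi_i\rangle=\langle Px,\varphi_i\rangle$ and similarly for $y$, so Parseval in $W$ gives
\[\|Px\|^2=\sum_{i=1}^M|\langle x,\varphi_i\rangle|^2=\sum_{i=1}^M|\langle y,\varphi_i\rangle|^2=\|Py\|^2.\]
So the real work is in $(1)\Rightarrow(2)$. The plan is to reduce to the two-dimensional subspace spanned by $Px$ and $Py$ and construct the orthonormal basis there by hand.

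Set $u=Px$, $v=Py$, so $\|u\|=\|v\|$. If $u$ and $v$ are linearly dependent, then either both are $0$ or $v=cu$ for some scalar $c$, and the norm condition forces $|c|=1$. In this case any orthonormal basis of $W$ works, because $|\langle y,\varphi_i\rangle|=|\langle v,\varphi_i\rangle|=|c|\,|\langle u,\varphi_i\rangle|=|\langle x,\varphi_i\rangle|$. So the interesting case is when $u,v$ are linearly independent, and here I would focus on the $2$-dimensional subspace $U=\lspan\{u,v\}\subseteq W$.

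In $U$ I will choose two orthonormal vectors $\varphi_1,\varphi_2$ with $|\langle u,\varphi_i\rangle|=|\langle v,\varphi_i\rangle|$, and then extend by any orthonormal basis of the orthogonal complement $U^{\perp}\cap W$; on those extra vectors both $\langle u,\varphi_i\rangle$ and $\langle v,\varphi_i\rangle$ vanish, so the identity is trivial. To build $\varphi_1,\varphi_2$ in $U$, pick a unimodular scalar $\alpha$ so that $\langle u,\alpha v\rangle$ is real (this is the one place the complex case really differs from the real one, which is presumably what the author intends us to address). Writing $v'=\alpha v$, we still have $\|v'\|=\|u\|$, and a direct computation gives
\[\langle u+v',u-v'\rangle=\|u\|^2-\|v'\|^2-\langle u,v'\rangle+\overline{\langle u,v'\rangle}=0\]
because $\langle u,v'\rangle$ is real. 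Linear independence of $u,v$ prevents $u\pm v'$ from vanishing, so
\[\varphi_1=\frac{u+v'}{\|u+v'\|},\qquad \varphi_2=\frac{u-v'}{\|u-v'\|}\]
form an orthonormal basis of $U$.

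It remains to verify the absolute-value identity. Using $\|u\|=\|v'\|$ and the reality of $\langle u,v'\rangle$,
\[\langle u,\varphi_1\rangle=\frac{\|u\|^2+\langle u,v'\rangle}{\|u+v'\|}=\frac{\overline{\langle u,v'\rangle}+\|v'\|^2}{\|u+v'\|}=\langle v',\varphi_1\rangle,\]
and an analogous calculation (now with a sign) shows $|\langle u,\varphi_2\rangle|=|\langle v',\varphi_2\rangle|$. Since $|\alpha|=1$, $|\langle v,\varphi_i\rangle|=|\langle v',\varphi_i\rangle|$ for $i=1,2$, and combining with $\langle x,\varphi_i\rangle=\langle u,\varphi_i\rangle$, $\langle y,\varphi_i\rangle=\langle v,\varphi_i\rangle$ finishes the proof. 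The only subtlety — the main obstacle in the complex case — is making this phase adjustment $\alpha$ at the start so that the $u\pm v'$ construction produces genuinely orthogonal vectors; the real case is simply $\alpha=1$.
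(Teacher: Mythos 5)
Your proof is correct and follows essentially the same route as the paper: reduce to $\lspan\{Px,Py\}$, adjust by a unimodular phase so the inner product is real, and take the normalized sum and difference as the first two basis vectors before extending orthonormally. The only difference is cosmetic — you fold the paper's separate cases $\langle Px,Py\rangle=0$ and $\langle Px,Py\rangle\neq 0$ into a single argument via the phase factor $\alpha$.
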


\begin{proof}
$(1) \Rightarrow (2)$:  Consider the vectors $Px,Py \in W$
with $\|Px\|=\|Py\|$. We examine the following three cases.
\vskip12pt
\noindent {\bf Case 1}:  Assume $Px = cPy$ for some
$|c|=1$.
\vskip12pt
In this case, for any orthonormal basis $\{\varphi_i\}_{i=1}^M$ for $W$ we have \[|\langle x, \varphi_i\rangle|=|\langle x, P\varphi_i\rangle|= |\langle Px, \varphi_i\rangle|=|\langle cPy,\varphi_i\rangle|=|c||\langle y,P\varphi_i\rangle|=|\langle y,\varphi_i\rangle|,\] for all
$i=1,2,\ldots,M$ as desired.

\

Hence, for the next two cases, we can assume $Px \not= cPy$ for
any $|c|=1$.

\vskip12pt
\noindent {\bf Case 2}:  Assume $\langle Px,Py\rangle =0$. Hence $\langle Py,Px \rangle = 0$.
\vskip12pt
In this case, let 
\[ \varphi_1 = \frac{Px+Py}{\|Px+Py\|},\mbox{ and }
\varphi_2 = \frac{Px-Py}{\|Px-Py\|}.\]

Note $\varphi_1$ and $\varphi_2$ are both unit norm. 
Letting $c= 1/(\|Px+Py\|\|Px-Py\|)$ we have
\[ \langle \varphi_1,\varphi_2\rangle = c\langle 
Px+Py,Px-Py\rangle = \|Px\|^2 -\|Py\|^2 +\langle Py,Px\rangle
- \langle Px,Py\rangle =0.\]
So $\{\varphi_1,\varphi_2\}$ is an orthonormal set.  
Also,
\begin{align*}
 |\langle x,Px+Py\rangle| &= |\langle Px,Px\rangle +\langle Px,Py\rangle| = \|Px\|^2\\
&= \|Py\|^2 = |\langle Py,Px\rangle +\langle Py,Py\rangle|= |\langle y,Px+Py\rangle|. \end{align*}

Similarly, $|\langle x,Px-Py\rangle| = |\langle y,Px-Py \rangle|.$
Hence, $|\langle x, \varphi_i\rangle|=|\langle y, \varphi_i\rangle|$ for $i=1,2$.
Note that $Px, Py \in \lspan {\{\varphi_1, \varphi_2\}}$. Now, take $\{\varphi_i\}_{i=1}^M$ to be any orthonormal completion of $\{\varphi_1,\varphi_2\}$ to an orthonormal basis for
$W$. Then, for all $i=3,4,\ldots, M$ we have
\[ \langle x,\varphi_i\rangle = \langle x, P\varphi_i\rangle = \langle Px, \varphi_i\rangle= 0 =\langle Py,\varphi_i\rangle = \langle y, P\varphi_i\rangle = \langle y,\varphi_i\rangle.\]
Therefore, $|\langle x, \varphi_i\rangle|=|\langle y,\varphi_i\rangle|$ for all $i=\{1,\dots, M\}$ where $\{\varphi_i\}_{i=1}^M$ is an orthonormal basis for $W$, as desired. 

\vskip12pt
\noindent {\bf Case 3}:  $\langle Px,Py\rangle \not= 0$.
\vskip12pt
In this case, let $d = \langle Px,Py\rangle /|\langle 
Px,Py\rangle |$ so that $|d|=1$, and let
\[ \varphi_1 = \frac{Px+dPy}{\|Px+dPy\|},\mbox{ and }
\varphi_2 = \frac{Px-dPy}{\|Px-dPy\|}.\]
Note that $\varphi_1$ and $\varphi_2$ are both unit norm. 
Letting $c = 1/\|Px+dPy\|\|Px-dPy\|$ we have
\begin{eqnarray*}
\langle \varphi_1,\varphi_2\rangle =&c\langle Px+dPy,
Px-dPy\rangle\\
=& c(\|Px\|^2 -\|dPy\|^2 + \langle dPy,Px\rangle -
\langle Px,dPy\rangle)\\
=& c\Big( \|Px\|^2 -|d|\|Py\|^2) + d\langle Py,Px\rangle - \overline{d}\langle Px,Py\rangle \Big)\\
=& \displaystyle c\Big((1-|d|)||Px||^2 + \frac{\langle Px,Py\rangle \langle Py,Px\rangle} {|\langle Px,Py\rangle |} - \frac{ \overline{\langle Px,Py\rangle}  \langle Px,Py\rangle} {|\langle Px,Py\rangle | }\Big)\\
=& \displaystyle c\Big( 0+\frac{|\langle Px,Py\rangle|^2} {|\langle Px,Py\rangle |} - \frac{ |\langle Px,Py\rangle|^2}{|\langle Px,Py\rangle | }\Big)\\
=&0.
\end{eqnarray*} 

Hence, $\{\varphi_1,\varphi_2\}$ is an orthonormal set.
Now the proof follows as in Case 2.  

\vskip12pt
$(2) \Rightarrow (1)$:  This is immediate by:
\[ \|Px\|^2 = \sum_{i=1}^M|\langle x,\varphi_i \rangle|^2
= \sum_{i=1}^M|\langle y,\varphi_i\rangle|^2 = \|Py\|^2.\]\end{proof}

Combining Lemma \ref{pete1} and Lemma \ref{pete2}, we arrive at a characterization for when $\{W_i\}_{i=1}^M$ does phaseless reconstruction in $\mathcal{H}_N$ in terms of orthonormal bases. Note, in \cite{CCPW} the authors proved the following result for the real case.  We will now give the proof for the complex case.

\begin{theorem}\label{CCPW} 
Let $\{W_i\}_{i=1}^M$ be subspaces of $\mathcal{H}_{N}$. The following are equivalent:
\begin{enumerate}
\item $\{W_i\}_{i=1}^M$ allows phase retrieval in $\mathcal{H}_N$.
\item For every orthonormal basis $\{\varphi_{i,j}\}_{j=1}^{J_i}$ of $W_i$, the set $\{\varphi_{i,j}\}_{i=1,j=1}^{M,J_i}$ allows phase retrieval in $\mathcal{H}_N$.
\end{enumerate}
\end{theorem}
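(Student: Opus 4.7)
The plan is to prove the two directions by directly chaining the two preceding lemmas, with the nontrivial direction hinging on the fact that Lemma \ref{pete2} produces an orthonormal basis that depends on the pair $(x,y)$, while hypothesis (2) is universally quantified over all orthonormal bases of each $W_i$.

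For the implication $(1) \Rightarrow (2)$, I would simply invoke Lemma \ref{pete1}. That lemma is stated for an arbitrary orthonormal basis $\{\varphi_{i,j}\}_{j=1}^{J_i}$ of each $W_i$ and shows that any such union allows phase retrieval, which is exactly what (2) asserts. So this direction is essentially a one-line citation.

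For $(2) \Rightarrow (1)$, I would argue as follows. Let $P_i$ denote the orthogonal projection onto $W_i$, and suppose $x,y \in \mathcal{H}_N$ satisfy $\|P_i x\| = \|P_i y\|$ for every $i=1,\ldots,M$. By Lemma \ref{pete2}, for each $i$ there exists an orthonormal basis $\{\varphi_{i,j}\}_{j=1}^{J_i}$ of $W_i$, possibly depending on $x$ and $y$, such that
\[
|\langle x,\varphi_{i,j}\rangle| = |\langle y,\varphi_{i,j}\rangle|, \qquad j=1,\ldots,J_i.
\]
Taking the union over $i$ produces one particular orthonormal family $\{\varphi_{i,j}\}_{i=1,j=1}^{M,J_i}$. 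Hypothesis (2) applies to this specific choice of bases, so $\{\varphi_{i,j}\}$ allows phase retrieval in $\mathcal{H}_N$, and we conclude $x = cy$ for some scalar $c$ with $|c|=1$. By Definition \ref{D1}, the family $\{W_i\}_{i=1}^M$ therefore allows phase retrieval, proving (1).

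The one conceptual point I would want to highlight is the role of the universal quantifier in (2). The basis produced by Lemma \ref{pete2} is tailored to the given pair $(x,y)$, so if (2) were weakened to the existence of a single orthonormal basis doing phase retrieval, the argument would break down. Since (2) is stated for \emph{every} orthonormal basis of each $W_i$, this issue does not arise, and the proof goes through without additional work. No further estimates or case analysis are required beyond what is already packaged inside Lemmas \ref{pete1} and \ref{pete2}, so I do not anticipate a serious obstacle in this argument.
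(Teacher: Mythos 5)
Your proposal is correct and matches the paper's proof essentially verbatim: the forward direction is a citation of Lemma \ref{pete1}, and the reverse direction uses Lemma \ref{pete2} to produce, for the given pair $(x,y)$, orthonormal bases on which the moduli of the frame coefficients agree, then applies hypothesis (2) to that particular choice. Your added remark about the universal quantifier in (2) is a correct observation but not a departure from the paper's argument.
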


\begin{proof}
$(1) \Rightarrow (2)$:  This is Lemma \ref{pete1}.

$(2) \Rightarrow (1)$:  Suppose we have $x,y \in \mathcal{H}_N$ with
$\|P_ix\|=\|P_iy\|$, for all $i=1,2,\ldots,M$.  By Lemma
\ref{pete2} we can choose orthonormal bases $\Phi =\{\varphi_{i,j}
\}_{i=1,j=1}^{M, J_i}$ so that 
\[ |\langle x,\varphi_{i,j}\rangle |=|\langle y,\varphi_{i,j}
\rangle|,\mbox{ for all } i,j.\]
By (2), $\Phi$ does phase retrieval and so $x=cy$ for some
$|c|=1$.  I.e.  $\{W_i\}_{i=1}^M$ does phase retrieval. 
\end{proof}

Since orthonormal bases are very restrictive, we would like to relax the conditions in Theorem \ref{CCPW} to see what properties the vectors within the subspaces have when the $\{W_i\}_{i=1}^M$ are assumed to allow phase retrieval. A natural next step would be to look at a Riesz basis as opposed to orthogonal vectors. In particular, since unitary operators are the only linear operators which preserve orthogonality, by moving to a Riesz basis we can instead look at invertible operators. However, the following example shows that if $\{W_i\}_{i=1}^M$ allows phase retrieval in $\mathcal{R}^N$ and $\{\varphi_{ij}\}_{j=1}^{J_i}$ is a Riesz basis for $W_i$ for each $i \in \{1,\dots,M\}$ then it is not necessarily true that $\{\varphi_{ij}\}_{j=1,i=1}^{M,J_i}$ allows phase retrieval in $\mathcal{R}^N$.

\begin{example} \label{JohnsEx}
\rm Let $\{e_i\}_{i=1}^3$ be an orthonormal basis for $\mathcal{R}^3$. Define the subspaces
\[W_1=\lspan\{e_1, e_2\}, W_2= \lspan\{e_2\}, W_3= \lspan\{e_3\},\]
\[ W_4= \lspan\{\frac{e_1+e_2}{2}\}, W_5= \lspan\{\frac{e_2+e_3}{2}\}, W_6= \lspan\{\frac{e_1+e_3}{2}\}.\]

Let $x \in R^3$. Then $x=\sum_{i=1}^3 \alpha_i e_i$ where $\alpha_i=\langle x,e_i\rangle$ for $i=1,2,3$.
We have $$||P_{W_i}x||^2=\left\{
\begin{array}{cc}
\alpha_1^2 + \alpha_2^2, & i=1\\
 \alpha_2^2, & i=2\\
\alpha_3^2, & i=3\\
 \frac{1}{2}\left( \alpha_1 + \alpha_2 \right)^2, & i=4\\
\frac{1}{2}\left( \alpha_2 + \alpha_3 \right)^2, & i=5\\
\frac{1}{2}\left( \alpha_1 + \alpha_3 \right)^2, & i=6
\end{array} \right.
$$

First, we will show that we can recover $\pm x$ from $\{||P_{W_i}x||^2\}_{i=1}^6$.

We can recover the absolute values of the coefficients:
\[|\alpha_1|=\sqrt{||P_{W_1}x||^2-||P_{W_2}x||^2}, |\alpha_2|=||P_{W_2}x||, |\alpha_3|=||P_{W_3}x||.\]

Thus, if two of the coefficients are zero then we have $x=\pm|\alpha_i|e_i$ for some $i$. From now on we will assume at least two of the coefficients are nonzero.

Case 1: Assume $\alpha_1=0$. We may assume without loss of generality that $\alpha_2 >0$, and thus $\alpha_2=||P_{W_2}x||$. Finally,
\[\alpha_3=\frac{2||P_{W_5}x||^2-||P_{W_2}x||^2-||P_{W_3}x||^2}{2||P_{W_2}x||}.\]

Case 2: Assume $\alpha_1 \neq 0$. We may assume without loss of generality that $\alpha_1>0$, and thus
\[\alpha_1 = \sqrt{||P_{W_1}x||^2-||P_{W_2}x||^2}.\]

We have
\[\alpha_2=\frac{2||P_{W_4}x||^2-||P_{W_1}x||^2}{2\sqrt{||P_{W_1}x||^2-||P_{W_2}x||^2}}\]

and
\[\alpha_3=\frac{2||P_{W_6}x||^2+||P_{W_2}x||^2-||P_{W_1}x||^2-||P_{W_3}x||^2}{2\sqrt{||P_{W_1}x||^2-||P_{W_2}x||^2}}.\]

This shows that $\{W_i\}_{i=1}^6$ does phase retrieval.

However, if we choose the linearly independent (and not orthonormal) basis $\{e_1+e_2,e_2\}$ for $W_1$ and the spanning element from the other subspaces, we get the set of vectors $\{e_1+e_2,e_2,e_2,e_3,e_1+e_2,e_2+e_3, e_1+e_3\}=\{e_1+e_2,e_2,e_3,e_2+e_3, e_1+e_3\}$. Notice that if we partition this set as follows $\{e_2+e_3,e_2,e_3\},\{e_1+e_2, e_1+e_3\}$ then neither set spans $R^3$. Hence this set does not have the complement property and therefore does not allow phase retrieval. 
\end{example}

\begin{remark} 
In Example \ref{JohnsEx}, $\{W_i^{\perp}\}_{i=1}^6$ does phase retrieval.  To see this, we just need to see that this
family can retrieve the norm of any vector $x \in \mathcal{R}^3$.  Let $\{Q_i\}_{i=1}^6$ be the orthogonal projections onto each of $\{W_i^{\perp}\}_{i=1}^6$.  Then,
\[ W_1^{\perp}= \lspan\{e_3\},\quad W_2^{\perp}=\lspan\{e_1,e_3\}\quad W_3^{\perp}=\lspan\{e_1,e_2\}.\]
Given a vector $x \in \mathcal{R}^3$, we have
\[ \|Q_1x\|^2 =|\langle x,e_3\rangle|^2\mbox{ and } \|Q_2x\|^2 = |\langle x,e_1\rangle |^2 + |\langle x,e_3\rangle|^2\]
\[\mbox{ and }
\|Q_3x\|^2 = |\langle x,e_1\rangle|^2+|\langle x,e_2\rangle|^2.\]
 Hence we know $\{|\langle x,e_i\rangle|^2\}_{i=1}^3$ and so we know $\|x\|^2$.
\end{remark}

In light of Example \ref{JohnsEx} we cannot replace ``orthonormal bases'' with ``Riesz bases'' in Theorem \ref{CCPW} (2). The next theorem shows that the key property of orthonormal bases in Theorem \ref{CCPW} is not just that they span, but that they give {\it norm retrieval}.

\begin{definition}
A family of vectors $\{\varphi_i\}_{i=1}^M$ does {\bf norm retrieval} in $\mathcal{H}_N$ if whenever $x,y \in \mathcal{H}_N$ satisfy
\[ |\langle x,\varphi_i\rangle|=|\langle y,\varphi_i\rangle|
\mbox{ for all }i=1,2,\ldots,M,\]
then $\|x\|=\|y\|$.

A family of projections $\{P_i\}_{i=1}^M$ does {\bf norm retrieval} in $\mathcal{H}_N$ if whenever $x,y\in \mathcal{H}_N$ satisfy
\[ \|P_ix\|=\|P_iy\| \mbox{ for all }i=1,2,\ldots,M,\]
then $\|x\|=\|y\|$.
\end{definition}

\begin{theorem}\label{pn} Let $\{W_{i}\}_{i=1}^{M}$ be subspaces of $\mathcal{H}_{N}$. The following are equivalent:
\begin{enumerate}
\item $\{W_{i}\}_{i=1}^{M}$ allows phase retrieval in $\mathcal{H}_{N}$.
\item For every sequence $\{\varphi_{i,j}\}_{j=1}^{J_{i}}\subset W_{i}$ which gives norm retrieval in $W_{i}$, the sequence $\{\varphi_{i,j}\}_{j=1,i=1}^{J_{i},M}$ allows phase retrieval.
\end{enumerate}

\begin{proof} (1) $\Rightarrow$ (2) For each $i=1,\ldots,M$ let $\{\varphi_{i,j}\}_{j=1}^{J_{i}}$ be a sequence in $W_{i}$ which gives norm retrieval in $W_{i}$. Let $x,y\in \mathcal{H}_{N}$ such that $|\langle x,\varphi_{i,j}\rangle|=|\langle y,\varphi_{i,j}\rangle|$ for all $j=1,\ldots,J_{i}$, $i=1,\ldots,M$. For each $i=1,\ldots,M$ let $P_{i}$ be the projection onto $W_{i}$. We have
\[|\langle P_{i}x,\varphi_{i,j}\rangle| = |\langle x,P_{i}\varphi_{i,j}\rangle|=|\langle x,\varphi_{i,j}\rangle|=|\langle y,\varphi_{i,j}\rangle| = |\langle y,P_{i}\varphi_{i,j}\rangle| = |\langle P_{i}y,\varphi_{i,j}\rangle|,\] 
and since $\{\varphi_{i,j}\}_{j=1}^{J_{i}}$ gives norm retrieval in $W_{i}$, we have $\norm{P_{i}x}=\norm{P_{i}y}$ for all $i=1,\ldots,M$. Since $\{W_{i}\}_{i=1}^{M}$ gives phase retrieval, we have $x=cy$ for some $c$ with $|c|=1$.

(2) $\Rightarrow$ (1) Since orthonormal bases give norm retrieval, (2) implies that each sequence $\{\varphi_{i,j}\}_{j=1,i=1}^{J_{i},M}$ gives phase retrieval, where $\{\varphi_{i,j}\}_{j=1}^{J_{i}}$ is an orthonormal basis for $W_{i}$. Theorem \ref{CCPW} implies that the subspaces $\{W_{i}\}_{i=1}^{M}$ allow phase retrieval.\end{proof}

\end{theorem}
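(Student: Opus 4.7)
The plan is to prove the two implications separately, leveraging Theorem \ref{CCPW} for the nontrivial direction. The whole argument is essentially an unpacking of the definitions of norm retrieval and phase retrieval together with the observation that, because each $\varphi_{i,j}$ lies in $W_i$, inner products against $\varphi_{i,j}$ are unchanged when we replace a vector by its projection onto $W_i$.

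For (1) $\Rightarrow$ (2), I would start with a sequence $\{\varphi_{i,j}\}_{j=1}^{J_i} \subset W_i$ that gives norm retrieval in $W_i$ for each $i$, and take $x,y \in \mathcal{H}_N$ with $|\langle x,\varphi_{i,j}\rangle| = |\langle y,\varphi_{i,j}\rangle|$ for all $i,j$. The key step is the identity $\langle x,\varphi_{i,j}\rangle = \langle P_i x,\varphi_{i,j}\rangle$ (and the analog for $y$), which holds because $\varphi_{i,j} \in W_i = P_i\mathcal{H}_N$ and $P_i$ is self-adjoint. Thus $|\langle P_i x,\varphi_{i,j}\rangle| = |\langle P_i y,\varphi_{i,j}\rangle|$ for every $j$, with both $P_i x$ and $P_i y$ lying in $W_i$. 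Since $\{\varphi_{i,j}\}_{j=1}^{J_i}$ gives norm retrieval inside $W_i$, this forces $\norm{P_i x} = \norm{P_i y}$. Repeating for each $i$ and then invoking the phase retrieval hypothesis on $\{W_i\}_{i=1}^M$ yields $x = cy$ for some $|c|=1$.

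For (2) $\Rightarrow$ (1), the observation is that any orthonormal basis of a subspace automatically gives norm retrieval: if $\{\varphi_{i,j}\}_{j=1}^{J_i}$ is an orthonormal basis of $W_i$, then for any $v \in W_i$, Parseval gives $\norm{v}^2 = \sum_j |\langle v,\varphi_{i,j}\rangle|^2$, so two vectors in $W_i$ with matching moduli on all $\varphi_{i,j}$ must have equal norms. Hence the hypothesis (2) applies to orthonormal bases of the $W_i$, so the concatenated family $\{\varphi_{i,j}\}_{j=1,i=1}^{J_i,M}$ allows phase retrieval. Theorem \ref{CCPW} then concludes that the subspaces $\{W_i\}_{i=1}^M$ themselves allow phase retrieval.

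There isn't really a hard step here; the whole content is recognizing the right norm-retrieval hypothesis to put on the $\{\varphi_{i,j}\}$. The most subtle point is the replacement $\langle x,\varphi_{i,j}\rangle = \langle P_i x,\varphi_{i,j}\rangle$, which is what lets norm retrieval inside $W_i$ interact cleanly with phase retrieval in the ambient space; once that move is made, both directions are essentially immediate, with (2) $\Rightarrow$ (1) delegated entirely to the already-proved Theorem \ref{CCPW}.
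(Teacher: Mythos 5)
Your proposal is correct and follows the paper's proof essentially verbatim: the same identity $\langle x,\varphi_{i,j}\rangle = \langle P_i x,\varphi_{i,j}\rangle$ drives (1) $\Rightarrow$ (2), and (2) $\Rightarrow$ (1) is handled by applying the hypothesis to orthonormal bases and citing Theorem \ref{CCPW}. No differences worth noting.
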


Theorem \ref{pn} provides a clear connection between phase retrieval by projections and sequences of vectors giving norm retrieval. In conjunction with this theorem, recall the following theorems from \cite{CCPW} and \cite{CCJW}.

\begin{theorem}\label{aaa}\cite{CCJW}
Given $\{\varphi_i\}_{i=1}^M$ spanning $\mathcal{H}_N$, there exists an invertible operator $T$ on $\mathcal{H}_N$ so that the collection of orthogonal projections onto the vectors $\{T\varphi_i\}_{i=1}^M$ allows norm retrieval.
\end{theorem}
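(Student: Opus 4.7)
The plan is to exhibit $T$ explicitly as the inverse square root of the frame operator of $\{\varphi_i\}_{i=1}^M$, so that $\{T\varphi_i\}$ becomes the canonical Parseval frame; norm retrieval then follows immediately from the Parseval identity. The underlying intuition is that norm retrieval by a family of rank-one projections is insensitive to nonzero rescaling of the spanning vectors, while Parseval frames always do norm retrieval for free.

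First I would reduce the projection statement to a vector statement. For a rank-one orthogonal projection $P$ onto the line $\lspan\{v\}$ with $v\neq 0$ one has $\|Px\| = |\langle x,v\rangle|/\|v\|$, so the condition $\|P_i x\|=\|P_i y\|$ for all $i$, where $P_i$ is the projection onto $\lspan\{T\varphi_i\}$, is equivalent to $|\langle x,T\varphi_i\rangle|=|\langle y,T\varphi_i\rangle|$ for all $i$. Consequently norm retrieval by the rank-one projections onto $\{T\varphi_i\}_{i=1}^M$ is equivalent to norm retrieval by the vectors $\{T\varphi_i\}_{i=1}^M$ themselves, and the task reduces to producing an invertible $T$ for which the latter holds.

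Since $\{\varphi_i\}_{i=1}^M$ spans $\mathcal{H}_N$ it is a frame, so by the theorem on frame operators cited earlier its frame operator $S=\sum_{i=1}^M \varphi_i\varphi_i^*$ is positive, self-adjoint, and invertible. I would set $T:=S^{-1/2}$, which is then itself positive, self-adjoint, and invertible. A direct computation gives
\[
\sum_{i=1}^M (T\varphi_i)(T\varphi_i)^* \;=\; T\Bigl(\sum_{i=1}^M \varphi_i\varphi_i^*\Bigr) T^* \;=\; S^{-1/2} S S^{-1/2} \;=\; I,
\]
so $\{T\varphi_i\}_{i=1}^M$ is a Parseval frame for $\mathcal{H}_N$.

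Finally, any Parseval frame $\{\psi_i\}_{i=1}^M$ does norm retrieval: the Parseval identity yields $\|x\|^2=\sum_{i=1}^M |\langle x,\psi_i\rangle|^2$ for every $x\in\mathcal{H}_N$, and therefore $|\langle x,\psi_i\rangle|=|\langle y,\psi_i\rangle|$ for all $i$ forces $\|x\|=\|y\|$. Applying this to $\{T\varphi_i\}_{i=1}^M$ and combining with the reduction in the first step completes the proof. There is essentially no obstacle here; the content of the theorem is the observation that every spanning set can be rescaled by an invertible operator to its canonical Parseval frame, after which the Parseval identity delivers norm retrieval automatically.
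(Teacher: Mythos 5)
Your proof is correct and complete: the reduction from rank-one projections to the vectors themselves via $\|P_ix\|=|\langle x,T\varphi_i\rangle|/\|T\varphi_i\|$, the choice $T=S^{-1/2}$ producing the canonical Parseval frame, and the Parseval identity giving norm retrieval are all sound. The paper itself states this theorem as a citation to \cite{CCJW} without reproducing a proof, but your argument is the standard one for this result, so there is nothing to add.
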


\begin{theorem}\label{bbb}\cite{CCPW}
If a family of vectors $\{\varphi_i\}_{i=1}^M$ does phase
retrieval on $\mathcal{H}_N$ and $T$ is an invertible operator on
$\mathcal{H}_N$, then $\{T\varphi_i\}_{i=1}^M$ does phase retrieval.
\end{theorem}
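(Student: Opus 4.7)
The plan is to verify the phase retrieval property for $\{T\varphi_i\}_{i=1}^M$ directly from Definition \ref{D1} by transferring the hypothesis back to $\{\varphi_i\}_{i=1}^M$ via the adjoint $T^{*}$. So I would start by fixing $x, y \in \mathcal{H}_N$ satisfying $|\langle x, T\varphi_i\rangle| = |\langle y, T\varphi_i\rangle|$ for every $i = 1, 2, \ldots, M$.

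The next step is the one observation that drives the whole proof: using the identity $\langle z, T\varphi_i\rangle = \langle T^{*}z, \varphi_i\rangle$ for any $z \in \mathcal{H}_N$, the measurements above can be rewritten as
\[
|\langle T^{*}x, \varphi_i\rangle| \;=\; |\langle T^{*}y, \varphi_i\rangle|, \quad i = 1, 2, \ldots, M.
\]
Because $\{\varphi_i\}_{i=1}^M$ is assumed to do phase retrieval on $\mathcal{H}_N$, these equalities force $T^{*}x = c\,T^{*}y$ for some scalar $c$ with $|c| = 1$.

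To finish, I would invoke the invertibility of $T$: since $T$ is invertible on the finite dimensional Hilbert space $\mathcal{H}_N$, its adjoint $T^{*}$ is also invertible, so applying $(T^{*})^{-1}$ to both sides of $T^{*}x = c\,T^{*}y$ yields $x = cy$ with $|c| = 1$. By Definition \ref{D1} this exhibits phase retrieval for $\{T\varphi_i\}_{i=1}^M$.

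There is essentially no obstacle here; the argument is a short unwinding of the definitions, and it works uniformly in the real and complex settings since no use is made of the complement property. A worthwhile remark to include is that the same proof shows the property is preserved under any bijection $T$ whose adjoint makes sense, which is why invertibility (rather than, say, unitarity) is the right hypothesis. One could alternatively give a separate real-case argument through the complement property, noting that $\lspan\{T\varphi_i\}_{i \in S} = T(\lspan\{\varphi_i\}_{i \in S})$ and that $T$ carries $\mathcal{H}_N$ onto $\mathcal{H}_N$, but the adjoint argument is cleaner and handles both fields at once.
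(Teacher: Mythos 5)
Your argument is correct: rewriting $\langle x, T\varphi_i\rangle$ as $\langle T^{*}x,\varphi_i\rangle$, invoking phase retrieval for $\{\varphi_i\}_{i=1}^M$ to get $T^{*}x = cT^{*}y$, and then applying $(T^{*})^{-1}$ is exactly the standard proof of this fact. Note that the paper states this theorem as a citation to \cite{CCPW} and gives no proof of its own, so there is nothing internal to compare against; your adjoint-based argument is the expected one and, as you observe, works uniformly over $\mathcal{R}$ and $\mathcal{C}$.
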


In light of these results, it seems natural to question if invertible operators preserve phase retrieval by projections. However, the following example illustrates that Theorem \ref{bbb} fails in the higher dimensional case.

\begin{example}\label{JohnsEx2}
Let $\{e_i\}_{i=1}^3$ be the standard orthonormal basis in $\mathcal{R}^3$. Define the subspaces $\{W_i\}_{i=1}^6$ as in Example \ref{JohnsEx}. Define the linear operator $T$ on the basis elements by:

\[Te_i=
\begin{cases}
e_1-e_2 & i=1\\
e_2 & i=2\\
e_3 & i=3
\end{cases}\]

We have
\[TW_1=W_1, TW_2=W_2, TW_3=W_3,\]
\[ TW_4=\lspan \{e_1\}, TW_5=W_5, TW_6=\lspan \{e_1-e_2+e_3\}.\]

We can choose the following orthonormal bases from each subspace:
\[\{e_1,e_2\} \subset TW_1, \{e_2\}\subset TW_2, \{e_3\} \subset TW_3, \{e_1\} \subset TW_4,\] \[\{\frac{1}{\sqrt{2}}\left(e_2+e_3\right)\} \subset TW_5, \{\frac{1}{\sqrt{3}}\left(e_1-e_2+e_3\right)\}\subset TW_6.\]

We claim that the union of these bases \[\{e_1,e_2,e_3,\frac{1}{\sqrt{2}}\left(e_2+e_3\right), \frac{1}{\sqrt{3}}\left(e_1-e_2+e_3\right)\}\] does not yield phase retrieval.

Indeed, if we partition this set into the following two sets:
\[\{e_2,e_3,\frac{1}{\sqrt{2}}\left(e_2+e_3\right)\}, \{e_1,\frac{1}{\sqrt{3}}\left(e_1-e_2+e_3\right)\}\]
then we see that neither of the two sets spans $\mathcal{R}^3$. Thus the subspaces $\{TW_i\}_{i=1}^6$ do not yield phase retrieval. Therefore, we have shown that there exists a bounded operator $T$ such that $\{TW_i\}_{i=1}^6$ does not yield phase retrieval, even though the original subspaces, $\{W_i\}_{i=1}^6$, did yield phase retrieval.
\end{example}

\section{Subspaces which allow phase retrieval}

In this section we develop properties of vectors and subspaces which allow phase retrieval. In particular, we start with a frame which fails phase retrieval and compute the minimum number of vectors necessary to add to the frame so that it will yield phase retrieval. After this computation, we then actually construct such sets. The following results show that for a given frame which fails the complement property, then for all partitions $I_1,I_2$ of the frame for which neither set spans $\mathcal{R}^M$, there is a minimal number of vectors we can add to make this set possess the complement property. Moreover, there is an open dense set of vectors which suffice.

\begin{theorem}\label{thmdef}
Let $\{\varphi_i\}_{i=1}^M$ be a frame for $\mathcal{R}^N$. Fix $k \in N \cup \{0\}$ such that
\[N-k=\min\limits_{\substack{I\subset \{1,\dots,M\}\\[2pt] \lspan \{\varphi_i\}_{i\in I}\neq \mathcal{R}^N}}\left(\dimm(\lspan \{\varphi_i\}_{i\in I^c}\right)).\]

Then there exists $\{\varphi_{M+1},\dots,\varphi_{M+k}\}\subset \mathcal{R}^N$ such that $\{\varphi_i\}_{i=1}^{M+k}$ has the complement property. Moreover, if $\{\varphi_i\}_{i=1}^M \cup \{\psi_j\}_{j=1}^s$ has the complement property, then $s \geq k$.
\end{theorem}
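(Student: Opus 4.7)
The plan is to treat the two halves separately. For the ``moreover'' (necessity) half, I would fix a subset $I_0 \subset \{1,\ldots,M\}$ realizing the minimum in the definition of $k$, so that $\lspan\{\varphi_i\}_{i\in I_0} \neq \mathcal{R}^N$ while $\dimm \lspan\{\varphi_i\}_{i\in I_0^c} = N-k$. Given any augmentation $\{\psi_j\}_{j=1}^s$ yielding the complement property, I would test that property on the single partition $(I_0,\; I_0^c \cup \{M+1,\ldots,M+s\})$. The $I_0$-side fails to span by construction, so the complementary side must span $\mathcal{R}^N$; since $\{\varphi_i\}_{i\in I_0^c}$ contributes only an $(N-k)$-dimensional subspace, the $\psi_j$'s must supply $k$ further linearly independent directions transverse to it, forcing $s\geq k$.

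For the existence half, I would construct the added vectors by a Zariski-genericity argument. Let $\mathcal{B}$ denote the finite family of ``bad partitions'' $(I,I^c)$ of $\{1,\ldots,M\}$, namely those where neither $\lspan\{\varphi_i\}_{i\in I}$ nor $\lspan\{\varphi_i\}_{i\in I^c}$ equals $\mathcal{R}^N$, and write $c_1(I), c_2(I)$ for the respective codimensions, both at most $k$ by the defining minimum. For each $(I,I^c)\in \mathcal{B}$ and each split $C_1\sqcup C_2 = \{1,\ldots,k\}$, the augmented partition $(I\cup\{M+j:j\in C_1\},\; I^c\cup\{M+j:j\in C_2\})$ violates the complement property exactly when both $\lspan\{\varphi_i\}_{i\in I}+\lspan\{\psi_j\}_{j\in C_1}$ and $\lspan\{\varphi_i\}_{i\in I^c}+\lspan\{\psi_j\}_{j\in C_2}$ are proper subspaces of $\mathcal{R}^N$; this is a determinantal polynomial condition on the coordinates of $(\psi_1,\ldots,\psi_k)\in (\mathcal{R}^N)^k$.

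The heart of the proof --- and the main obstacle --- is to show that each of these polynomial \emph{failure loci} is a \emph{proper} algebraic subvariety of $(\mathcal{R}^N)^k$, so that their finite union has open dense complement and any tuple in that complement furnishes the required augmentation. Equivalently, for each fixed bad partition and split $(C_1,C_2)$ one must exhibit at least one $(\psi_j)$-tuple making one of the two sums equal $\mathcal{R}^N$; this reduces to the combinatorial/linear-algebraic condition that the split satisfies $|C_1|\geq c_1(I)$ or $|C_2|\geq c_2(I)$, after which a dimension count using genericity finishes the step. I expect the decisive technical work to lie in this split-versus-codimension bookkeeping: the inequalities $c_1(I), c_2(I)\leq k$ have to be combined with $|C_1|+|C_2|=k$ and with structural features of $\mathcal{B}$ inherited from the defining minimum. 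An explicit inductive construction seeded at the minimizing $I_0$ --- choose the first $\psi_j$'s to break that partition's obstruction, then continue to ``smaller'' bad partitions --- may provide a cleaner alternative to the pure genericity argument and simultaneously yield the claimed open dense set of admissible augmentations.
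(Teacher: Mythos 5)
Your ``moreover'' argument is exactly the paper's: test the complement property on the single partition $(I_0,\; I_0^c\cup\{M+1,\dots,M+s\})$ for a minimizing $I_0$ and count dimensions. That half is correct and complete.

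The existence half has a genuine gap, and it sits precisely at the step you yourself flag as the heart of the proof. You need, for every bad partition $(I,I^c)$ and every split $C_1\sqcup C_2=\{1,\dots,k\}$, that $|C_1|\ge c_1(I)$ or $|C_2|\ge c_2(I)$; but the only inequalities available, namely $c_1(I),c_2(I)\le k$ and $|C_1|+|C_2|=k$, do not imply this (take $c_1=c_2=k=2$ and $|C_1|=|C_2|=1$), and no bookkeeping can rescue the step because the conclusion itself fails there. Concretely, take $N=4$ and $\Phi=\{e_1,e_2,e_1+e_2,e_3,e_4,e_3+e_4\}$. Every $5$-element subset spans and no two vectors are parallel, so the minimum in the definition of $k$ equals $2$ (attained, e.g., at $I=\{e_1,e_2,e_1+e_2,e_3\}$ with $I^c=\{e_4,e_3+e_4\}$), giving $k=2$. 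Yet for \emph{any} $\psi_1,\psi_2\in\mathcal{R}^4$ the partition $\{e_1,e_2,e_1+e_2,\psi_1\}$ versus $\{e_3,e_4,e_3+e_4,\psi_2\}$ has both sides contained in $3$-dimensional subspaces, so $\Phi\cup\{\psi_1,\psi_2\}$ never has the complement property; here $c_1=c_2=2=k$ and the split $|C_1|=|C_2|=1$ is exactly the unfillable case. The number of generic vectors needed to destroy a bad partition is $c_1(I)+c_2(I)-1$, which can strictly exceed $k=\max_I\max(c_1,c_2)$, so the invariant $k$ is the wrong one for the existence claim (though it is the right lower bound for the ``moreover'' claim). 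For what it is worth, the paper's own proof has the same blind spot: its sets $A_j$ only track partitions of the \emph{original} index set with all added vectors adjoined to \emph{both} sides, so the conclusion $A_k=\emptyset$ says nothing about partitions of $\{1,\dots,M+k\}$ that split the new vectors between the two sides --- which is where the example above lives. You correctly isolated the decisive condition, but it cannot be verified, and any repair must replace $k$ by $\max_{(I,I^c)\ \mathrm{bad}}\left(c_1(I)+c_2(I)-1\right)$.
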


\begin{proof}
Let $\{\varphi_i\}_{i=1}^M$ be a frame for $\mathcal{R}^N$. Fix $k \in M \cup \{0\}$ such that
\[N-k=\min\limits_{\substack{I\subset \{1,\dots,M\}\\[2pt] \lspan \{\varphi_i\}_{i\in I}\neq \mathcal{R}^N}}\left( \dimm(\lspan \{\varphi_i\}_{i\in I^c}\right)).\]

Define

\[A_0:= \{I\colon \dimm(\lspan \{\varphi_i\}_{i\in I}) <N \mbox{ and }\dimm(\lspan \{\varphi_i\}_{i\in I^c}) <N\}.\]

Since $\{\varphi_i\}_{i=1}^M$ is a finite collection of vectors then there is a finite number of sets $I \subset A_0$. For each $I\subset A_0$ we see that $\dimm(\lspan \{\varphi_i\}_{i\in I}) \leq N-1$ and $\dimm(\lspan \{\varphi_i\}_{i\in I^c}) \leq N-1$, and hence $\lspan \{\varphi_i\}_{i\in I}$ and $\lspan \{\varphi_i\}_{i\in I^c}$ both have measure zero. This implies that there is a vector (in fact, an open dense set of vectors) $\varphi_{M+1}$ such that $\varphi_{M+1} \notin \lspan \{\varphi_i\}_{i\in I}$, $\varphi_{M+1} \notin \lspan \{\varphi_i\}_{i\in I^c}$, $\dimm(\lspan (\{\varphi_i\}_{i\in I}\cup \varphi_{M+1})) \geq N-k+1$ and $\dimm(\lspan (\{\varphi_i\}_{i\in I^c}\cup \varphi_{M+1})) \geq N-k+1$, for any $I\subset A_0$.

Next, define 
\[A_1:= \{I\colon \dimm(\lspan  (\{\varphi_i\}_{i\in I}\cup \varphi_{M+1})) <N \mbox{ and }\dimm(\lspan  (\{\varphi_i\}_{i\in I^c}\cup \varphi_{M+1})) <N\}.\]
For each $I\subset A_1$, $\dimm(\lspan(\{\varphi_i\}_{i\in I} \cup \varphi_{M+1})) \leq N-1$ and $\dimm(\lspan(\{\varphi_i\}_{i\in I^c}\cup \varphi_{M+1})) \leq N-1$ and hence have measure zero. Thus the union of all subspaces defined by $I \subset A_1$ have measure zero. Hence there exists a vector $\varphi_{M+2}$ such that $\varphi_{M+2} \notin \lspan(\{\varphi_i\}_{i\in I}\cup \varphi_{M+1})$, $\varphi_{M+2} \notin \lspan(\{\varphi_i\}_{i\in I^c}\cup \varphi_{M+1})$, $\dimm(\lspan(\{\varphi_i\}_{i\in I}\cup \{\varphi_{M+j}\}_{j=1}^2)) \geq N-k+2$ and $\dimm(\lspan(\{\varphi_i\}_{i\in I^c}\cup \{\varphi_{M+j}\}_{j=1}^2)) \geq N-k+2$, for any $I\subset A_1$.

Continue in this manner by defining $A_k$ and adding $\varphi_{M+k}$ until $|A_k|=0$. Therefore $\{\varphi_i\}_{i=1}^{M+k}$ has the complement property.

For the \emph{moreover} part let $I\subset\{1,\ldots,M\}$ be such that $\lspan \{\varphi_{i}\}_{i\in I}\neq \mathcal{R}^N$ and
\[N-k=\dimm(\lspan \{\varphi_i\}_{i\in I^c}).\]
If $\{\psi_{j}\}_{j=1}^{s}$ is a sequence such that $\{\varphi_{i}\}_{i=1}^{M}\cup\{\psi_{i}\}_{j=1}^{s}$ has the complement property, then $\{\varphi_{i}\}_{i\in I^{c}}\cup\{\psi_{i}\}_{j=1}^{s}$ must span $\mathcal{R}^N$, that is
\[N=\dimm(\lspan (\{\varphi_{i}\}_{i\in I^{c}}\cup\{\psi_{j}\}_{j=1}^{s}))\leq N-k+s.\]
\end{proof}

\begin{theorem}\label{dense}
Let $\{\varphi_i\}_{i=1}^M$ be a frame for $\mathcal{R}^N$. Fix $k \in N \cup \{0\}$ such that
\[N-k=\min\limits_{\substack{I\in \{1,\dots,M\}\\[2pt] \lspan\{\varphi_i\}_{i\in I}\neq \mathcal{R}^N}}\left(\dimm(\lspan \{\varphi_i\}_{i\in I^c})\right).\]

Then there exists an open dense set $O_{M+1}\subset \mathcal{R}^N$ such that for all $\varphi_{M+1} \in O_{M+1}$,
\[N-k+1 = \min\limits_{\substack{I\in \{1,\dots,M+1\}\\[2pt] \lspan\{\varphi_i\}_{i\in I}\neq \mathcal{R}^N}}\left(\dimm(\lspan \{\varphi_i\}_{i\in I^c})\right).\]
\end{theorem}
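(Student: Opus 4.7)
The plan is to adapt the construction in the proof of Theorem \ref{thmdef} to exhibit the set of admissible $\varphi_{M+1}$ explicitly as the complement of a finite union of proper linear subspaces. First I would collect the relevant partitions:
\[\mathcal{A}=\{I\subset\{1,\dots,M\}:\lspan\{\varphi_i\}_{i\in I}\neq\mathcal{R}^N\ \text{and}\ \lspan\{\varphi_i\}_{i\in I^c}\neq\mathcal{R}^N\}.\]
This set is finite, and for each $I\in\mathcal{A}$ both $\lspan\{\varphi_i\}_{i\in I}$ and $\lspan\{\varphi_i\}_{i\in I^c}$ are proper subspaces of $\mathcal{R}^N$ of dimension at most $N-1$; by the definition of $k$ they also both have dimension at least $N-k$. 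Define
\[O_{M+1}:=\mathcal{R}^N\setminus\bigcup_{I\in\mathcal{A}}\bigl(\lspan\{\varphi_i\}_{i\in I}\cup\lspan\{\varphi_i\}_{i\in I^c}\bigr).\]
As the complement of finitely many proper subspaces, $O_{M+1}$ is open and dense in $\mathcal{R}^N$.

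Next I would verify both inequalities for the new minimum. For the upper bound, pick any $I_0\in\mathcal{A}$ attaining $\dimm\lspan\{\varphi_i\}_{i\in I_0^c}=N-k$, and examine the partition $(I_0,I_0^c\cup\{M+1\})$ of $\{1,\dots,M+1\}$; since $\lspan\{\varphi_i\}_{i\in I_0}\neq\mathcal{R}^N$ and $\varphi_{M+1}\notin\lspan\{\varphi_i\}_{i\in I_0^c}$, this partition is admissible and realizes the value $N-k+1$. For the lower bound, take any partition $(I',I'{}^c)$ of $\{1,\dots,M+1\}$ with $\lspan\{\varphi_i\}_{i\in I'}\neq\mathcal{R}^N$, write $J=I'\cap\{1,\dots,M\}$, and distinguish cases according to whether $M+1\in I'$ or $M+1\in I'{}^c$. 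If $J\notin\mathcal{A}$ the old partition already had a spanning side, and one checks directly that the complement still has dimension $\geq N-k+1$; if $J\in\mathcal{A}$ with $M+1\in I'{}^c$, then $\varphi_{M+1}\notin\lspan\{\varphi_i\}_{i\in J^c}$ increases the complement's dimension by one, giving $\dimm\lspan\{\varphi_i\}_{i\in J^c\cup\{M+1\}}\geq N-k+1$.

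The main obstacle is the remaining case $J\in\mathcal{A}$ with $M+1\in I'$: here $\varphi_{M+1}$ sits on the non-spanning side of the old partition, and to prevent $(J\cup\{M+1\},J^c)$ from realizing the forbidden complement dimension $N-k$ one would leverage $\varphi_{M+1}\notin\lspan\{\varphi_i\}_{i\in J}$, which is also guaranteed by $O_{M+1}$, to argue that either the augmented side spans $\mathcal{R}^N$ (removing the partition from the min altogether) or the opposite side $\lspan\{\varphi_i\}_{i\in J^c}$ already had dimension strictly exceeding $N-k$. Tracking this carefully across all $I\in\mathcal{A}$ is the delicate step where the precise choice of $O_{M+1}$ pays off.
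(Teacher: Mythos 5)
Your construction of $O_{M+1}$ is exactly the paper's (the complement of the finite union of proper subspaces $\lspan\{\varphi_i\}_{i\in I}$ and $\lspan\{\varphi_i\}_{i\in I^c}$ over $I\in\mathcal{A}$), and your case analysis for the lower bound is in fact more careful than the paper's one-line verification. But the case you yourself flag as ``the main obstacle'' --- $M+1\in I'$ with $J=I'\cap\{1,\dots,M\}\in\mathcal{A}$ --- is a genuine gap, and the dichotomy you propose to close it (``either $J\cup\{M+1\}$ spans, or $\dimm\lspan\{\varphi_i\}_{i\in J^c}>N-k$'') is false in general. The only information available is that both $\dimm\lspan\{\varphi_i\}_{i\in J}$ and $\dimm\lspan\{\varphi_i\}_{i\in J^c}$ lie between $N-k$ and $N-1$; when $k\ge 2$ nothing prevents having $\dimm\lspan\{\varphi_i\}_{i\in J}\le N-2$ (so that $J\cup\{M+1\}$ fails to span for \emph{every} choice of $\varphi_{M+1}$) while simultaneously $\dimm\lspan\{\varphi_i\}_{i\in J^c}=N-k$.

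Concretely, take $N=4$, $M=6$, and $\Phi=\{e_1,e_2,e_3,e_4,e_1+e_2,e_3+e_4\}$. Every $5$-element subset spans and no two vectors are parallel, so the minimum in the hypothesis equals $2$, i.e. $N-k=2$ and $k=2$. Let $J$ index $\{e_1,e_2,e_1+e_2\}$ and $J^c$ index $\{e_3,e_4,e_3+e_4\}$. For every $\varphi_7\in\mathcal{R}^4$, the set indexed by $J\cup\{7\}$ spans at most $\lspan\{e_1,e_2,\varphi_7\}$, of dimension at most $3<4$, while $\dimm\lspan\{\varphi_i\}_{i\in J^c}=2=N-k$. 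Hence the new minimum remains $2$ rather than $N-k+1=3$ for every choice of $\varphi_7$, so no set $O_{M+1}$ can work: the difficulty you isolated is not merely technical but fatal to the statement whenever $k\ge 2$. (Your argument and the theorem are fine when $k=1$, since then $\dimm\lspan\{\varphi_i\}_{i\in J}\ge N-1$ forces $J\cup\{M+1\}$ to span.) For what it is worth, the paper's own proof silently skips exactly this case too: it only bounds the dimension of the side of a partition that contains $\varphi_{M+1}$, never the dimension of $\lspan\{\varphi_i\}_{i\in I^c}$ when $\varphi_{M+1}$ is adjoined to the non-spanning side $I$.
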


\begin{proof}
This follows from Theorem \ref{thmdef} since the union of all subsets $I \subset A_0$ has measure zero. Hence its complement, call it $O_{M+1}$, is full measure and  thus open and dense. For any vector $\varphi_{M+1} \in O_{M+1}$, we have $\varphi_{M+1} \notin \lspan\{\varphi_i\}_{i\in I}$, $\varphi_{M+1} \notin \lspan\{\varphi_i\}_{i\in I^c}$, $\dimm(\lspan(\{\varphi_i\}_{i\in I}\cup \varphi_{M+1})) \geq N-k+1$ and $\dimm(\lspan(\{\varphi_i\}_{i\in I^c}\cup \varphi_{M+1})) \geq N-k+1$, for any $I\subset A_0$.
\end{proof}

In light of Theorem \ref{thmdef} and Theorem \ref{dense}, the next result is surprising.  It says that if we
have families of vectors $\Phi_1,\Phi_2$ which do phase retrieval in
two different Hilbert spaces, and we want to do phase
retrieval in the orthogonal sum of these Hilbert spaces,
we will have to add a very large number of vectors to
$\Phi_1$ and $\Phi_2$.

\begin{theorem}\label{thm5.22}
Let $\Phi_j=\{\varphi_i^j\}_{i=1}^{M_j}$ yield phase retrieval on
$\mathcal{H}_{N_j}$ for $j=1,2$.  Let $H=\mathcal{H}_{N_1}\oplus \mathcal{H}_{N_2}$.
If $\Phi = \{\varphi_i\}_{i=1}^M$ is a set of vectors in $H$
and $\Phi\cup \Phi_1 \cup \Phi_2$ does phase retrieval, then
$M\ge N_1+N_2-1$.

Moreover, there exists a $\Phi = \{\varphi_i\}_{i=1}^{N_1+N_2-1}$
so that $\Phi \cup \Phi_1 \cup \Phi_2$ does phase retrieval.
\end{theorem}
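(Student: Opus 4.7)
The plan is to prove the two parts (the lower bound and the existence) separately. For the lower bound, I would exhibit a bad partition whenever $M \leq N_1+N_2-2$. Given any $\Phi = \{\varphi_i\}_{i=1}^{M}$ with $M \leq N_1+N_2-2$, split it as $\Phi = \Phi_A \cup \Phi_B$ with $|\Phi_A| \leq N_2-1$ and $|\Phi_B| \leq N_1-1$; such a split exists precisely because $|\Phi_A|+|\Phi_B| = M \leq N_1+N_2-2$. Now consider the partition of $\Phi \cup \Phi_1 \cup \Phi_2$ into $J = \Phi_1 \cup \Phi_A$ and $J^c = \Phi_2 \cup \Phi_B$. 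Since $\Phi_1 \subset \mathcal{H}_{N_1}$, we have $\dimm\lspan J \leq N_1 + |\Phi_A| \leq N_1+N_2-1 < \dimm H$, and symmetrically $\dimm\lspan J^c \leq N_1+N_2-1 < \dimm H$. Neither side spans $H$, so $\Phi \cup \Phi_1 \cup \Phi_2$ fails the complement property, and hence (by the real-case equivalence of phase retrieval with the complement property quoted earlier in the paper) fails phase retrieval.

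For existence, I plan to take $\varphi_j = u_j \oplus v_j \in H$ for $j = 1,\ldots,N_1+N_2-1$, where $\{u_j\}_{j=1}^{N_1+N_2-1} \subset \mathcal{H}_{N_1}$ and $\{v_j\}_{j=1}^{N_1+N_2-1} \subset \mathcal{H}_{N_2}$ are both full spark (a generic choice produces both full-spark families simultaneously). I would then verify the complement property of $\Phi \cup \Phi_1 \cup \Phi_2$ by case analysis. For any partition $(J,J^c)$, write $J_0 = J \cap \Phi$ and $J_i = J \cap \Phi_i$ for $i=1,2$. Since each $\Phi_i$ has the complement property in $\mathcal{H}_{N_i}$, one of $J_i, J_i^c$ spans $\mathcal{H}_{N_i}$. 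In the two aligned cases (both $J_i$ span $\mathcal{H}_{N_i}$, or both $J_i^c$ do), we immediately get $\lspan J = H$ or $\lspan J^c = H$.

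The genuine work is in the two mixed cases; by symmetry it suffices to treat the case in which $J_1$ spans $\mathcal{H}_{N_1}$ and $J_2^c$ spans $\mathcal{H}_{N_2}$, so that $\mathcal{H}_{N_1} \subseteq \lspan J$ and $\mathcal{H}_{N_2} \subseteq \lspan J^c$. Since $|J_0|+|J_0^c| = N_1+N_2-1$, at least one of $|J_0| \geq N_2$ or $|J_0^c| \geq N_1$ must hold (otherwise the sum is at most $(N_2-1)+(N_1-1)$). If $|J_0| \geq N_2$, the full spark of $\{v_j\}$ gives $\lspan\{v_j\}_{j\in J_0} = \mathcal{H}_{N_2}$, hence the projection of $\lspan J$ onto $\mathcal{H}_{N_2}$ is all of $\mathcal{H}_{N_2}$; combined with $\mathcal{H}_{N_1} \subseteq \lspan J$, the decomposition $\dimm V = \dimm(\text{proj onto }\mathcal{H}_{N_2}) + \dimm(V\cap\mathcal{H}_{N_1})$ forces $\lspan J = H$. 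If instead $|J_0^c| \geq N_1$, the symmetric argument using full spark of $\{u_j\}$ and $\mathcal{H}_{N_2} \subseteq \lspan J^c$ gives $\lspan J^c = H$. The main obstacle I expect is the final mixed case: packaging the full-spark input on the coordinate projections so that it actually delivers spanning in $H$ itself, which is why the dimension identity above serves as the key bridge.
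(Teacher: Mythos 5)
Your argument is correct. The lower bound is the same as the paper's: both of you split $\Phi$ into pieces of sizes at most $N_2-1$ and $N_1-1$, attach them to $\Phi_1$ and $\Phi_2$ respectively, and observe that neither side of the resulting partition can span $H$, so the complement property (hence phase retrieval) fails. For the \emph{moreover} part your construction genuinely differs from the paper's. The paper builds $\Phi$ iteratively: at each step it collects all subsets of the vectors chosen so far that span at least one of $\mathcal{H}_{N_1},\mathcal{H}_{N_2}$ but not $H$, and picks the next vector outside the union of their spans; the final spanning step in the mixed case is then an inductive dimension count, adding the $N_2$ vectors of $J_0$ one at a time and gaining a dimension each time by construction. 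You instead take $\varphi_j=u_j\oplus v_j$ with $\{u_j\}$ and $\{v_j\}$ full spark in the two coordinates, and close the mixed case via rank--nullity for the coordinate projection: $\dimm(\lspan J)=\dimm P_2(\lspan J)+\dimm(\lspan J\cap \mathcal{H}_{N_1})=N_2+N_1$. Both proofs hinge on the same pigeonhole observation that $|J_0|+|J_0^c|=N_1+N_2-1$ forces $|J_0|\ge N_2$ or $|J_0^c|\ge N_1$. Your version buys a completely explicit $\Phi$ (any pair of full spark coordinate families works, independent of $\Phi_1,\Phi_2$) and a cleaner spanning argument; the paper's version adapts the choice to the given frames but needs the genericity/avoidance bookkeeping at every step. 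One small point in your favor: you state explicitly that the final implication from the complement property to phase retrieval uses the real-case equivalence, a restriction the paper leaves implicit (the lower bound, by contrast, only uses that phase retrieval implies the complement property, which holds in both cases).
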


 \begin{proof}

\emph{Claim:} $M\geq N_1 + N_2 -1$

\emph{Proof of claim:} If $M \leq N_1 +N_2 -2$, then there exists a partition of $\{1,\dots, M\}$ into $\{I_1, I_2\}$ with $|I_1|\leq N_2-1$ and $|I_2| \leq N_1 - 1$. Hence $\Phi_j \cup \{\varphi_i\}_{i\in I_j}$ doesn't span for $j=1,2$.

\emph{Proof of the moreover part:}
Consider the span of all subsets of $\Phi_1 \cup \Phi_2$ which do NOT span $H$ but do span at least either $\mathcal{H}_{N_1}$ or $\mathcal{H}_{N_2}$. There exists a vector $\varphi_1$ which is not in the span of any of these subsets. Do the same thing with $\Phi_1 \cup \Phi_2 \cup \{\varphi_1\}$ and pick $\varphi_2$ not in the span of any of these subsets. Continue in this way to pick $\Phi=\{\varphi_i\}_{i=1}^{N_1+N_2-1}$.

\emph{Claim:} $\Phi \cup \Phi_1 \cup \Phi_2$ does phase retrieval. In particular, it has the complement property.

\emph{ Proof of claim:} Pick a subset $J=(J_0,J_1,J_2)$ such that $J_0 \subset \{1,\dots, N_1+N_2-1\}$, $J_1 \subset \{1,\dots, N_1\}$, and $J_2 \subset \{1,\dots, N_2\}$. Either $\{\varphi_i^j\}_{i\in J_j}$ or $\{\varphi_i^j\}_{i\in J_j^c}$ spans $H_{N_j}$ for $j=1,2$.

\emph{Case 1:} If $\{\varphi_i^j\}_{i\in J_j}$ spans $\mathcal{H}_{N_j}$ for $j=1,2$, then we are done.

\emph{Case 2:} If $\{\varphi_i^j\}_{i\in J_j^c}$ spans $\mathcal{H}_{N_j}$ for $j=1,2$, then we are done.

 \emph{Case 3:} Without loss of generality assume 
\[\{\Phi_i\}_{i\in J_0} \cup \{\Phi_i^1\}_{i\in J_1} \cup \{\Phi_i^2\}_{i \in J_2} \mbox{ spans } \mathcal{H}_{N_1}\]
\[ \mbox{and }\{\Phi_i\}_{i\in J_0^c} \cup \{\Phi_i^1\}_{i\in J_1^c} \cup \{\Phi_i^2\}_{i \in J_2^c} \mbox{ spans } \mathcal{H}_{N_2}.\] We have either $|J_0| \geq N_2$ or $|J_0^c| \geq N_1$, otherwise 
\[|J_0|+|J_0^c| \leq N_1-1 + N_2-1 =N_1+N_2-2,\] a contradiction. 

Without loss of generality, assume that $|J_0|\geq N_2$.

We need to show that $\{\Phi_i\}_{i\in J_0} \cup \{\Phi_i^1\}_{i\in J_1} \cup \{\Phi_i^2\}_{i \in J_2}$ spans $H$.

Let $J_0=\{k_1 <k_2< \dots <k_{N_2}\}$. Then 
\begin{align*}
\dimm(\lspan(\{\Phi_i^1\}_{i\in J_1}, \{\Phi_{k_i}\}_{i=1}^{N_2})) 
&=
 1 + \dimm(\lspan(\{\Phi_i^1\}_{i\in J_1}, \{\Phi_{k_i}\}_{i=1}^{N_2-1}))\\
&= 2+ \dimm(\lspan(\{\Phi_i^1\}_{i\in J_1}, \{\Phi_{k_i}\}_{i=1}^{N_2-2}))\\
&= \dots \\
&= N_2 + \dimm(\lspan\mathcal{H}_{N_1}) \\
&= \dimm(\mathcal{H}).
\end{align*}

Therefore $\Phi \cup \Phi_1 \cup \Phi_2$ does phase retrieval.
\end{proof}

\section{Properties of subspaces which fail phase retrieval}

In the one-dimensional real case, the complement property completely classifies phase retrieval. However, the complement property is not a sufficient condition for a collection of subspaces to allow phase retrieval. When the complement property fails, we will see that the corresponding partition yields two hyperplanes. 

\begin{theorem}\label{lem200}
Let $\{W_i\}_{i=1}^M$ be subspaces of $\mathcal{H}_N$ which yield phase retrieval but $\{W_i\}_{i\in I}$ fails phase retrieval for any $I\subset \{1,\dots, M\}$ with $|I|=M-1$.  If $\{f_{i,j}\}_{j=1}^{d_i}$ is an orthonormal basis for $W_i$ for all $i \in I$, and $I_1$ and $I_2$ is a partition of $\{\left(i,j\right)\}_{j=1, i\in I}^{d_i}$ so that $\{f_{i,j}\}_{\left(i,j\right)\in I_l}$ for $l=1,2$ do not span $\mathcal{H}_N$, then \[ \dim\left(\lspan \{f_{i,j}\}_{\left(i,j\right)\in I_1}\right) = \dim\left(\lspan \{f_{i,j}\}_{\left(i,j\right)\in I_2}\right)= N-1.\]
\end{theorem}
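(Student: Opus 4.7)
The plan is to argue by contradiction using Theorem \ref{CCPW} and the Balan--Casazza--Edidin fact that phase retrieval implies the complement property. Write $V_\ell = \lspan\{f_{i,j}\}_{(i,j)\in I_\ell}$ for $\ell=1,2$, and let $\{k\} = \{1,\dots,M\}\setminus I$. Suppose, toward a contradiction, that $\dim V_1 \le N-2$. I will extend the given partition by distributing a cleverly chosen orthonormal basis $\{f_{k,j}\}_{j=1}^{d_k}$ of $W_k$ between the two sides, producing a partition of the full orthonormal-basis union for $\{W_i\}_{i=1}^M$ in which neither side spans $\mathcal{H}_N$. Since $\{W_i\}_{i=1}^M$ does phase retrieval, Theorem \ref{CCPW} says this full union does phase retrieval as well, hence has the complement property --- contradicting the partition just constructed.

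The distribution splits into two cases. If $V_2 + W_k \ne \mathcal{H}_N$, simply put every $f_{k,j}$ on the $I_2$ side for any orthonormal basis of $W_k$: the resulting spans are $V_1$ and $V_2 + W_k$, and neither equals $\mathcal{H}_N$. Otherwise $V_2 + W_k = \mathcal{H}_N$, and then $W_k \not\subseteq V_2$ (else $V_2 = \mathcal{H}_N$, contradicting that $\{f_{i,j}\}_{(i,j)\in I_2}$ fails to span), so $P_{W_k}(V_2^\perp) \ne \{0\}$. Pick a nonzero $v \in P_{W_k}(V_2^\perp)$, extend $f_{k,1} := v/\|v\|$ to an orthonormal basis $\{f_{k,1},\dots,f_{k,d_k}\}$ of $W_k$, place $f_{k,1}$ on the $I_1$ side, and place the remaining vectors on the $I_2$ side.

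To verify that neither side spans: the $I_1$ side spans $V_1 + \lspan\{f_{k,1}\}$, which has dimension at most $\dim V_1 + 1 \le N-1$. For the $I_2$ side, pick $u \in V_2^\perp$ with $P_{W_k}u = \|v\| f_{k,1}$, so $u \ne 0$; then $u \perp V_2$ and, for every $j \ge 2$,
\[
\langle u, f_{k,j}\rangle = \langle P_{W_k}u, f_{k,j}\rangle = \|v\|\langle f_{k,1}, f_{k,j}\rangle = 0,
\]
so $u$ is a nonzero vector orthogonal to $V_2 + \lspan\{f_{k,j}\}_{j=2}^{d_k}$. Hence this span is proper, completing the contradiction. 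We conclude $\dim V_1 \ge N-1$, and combined with $\dim V_1 \le N-1$ (since $V_1 \ne \mathcal{H}_N$) we get $\dim V_1 = N-1$. A symmetric argument gives $\dim V_2 = N-1$.

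The main obstacle is Case 2: the $I_2$ side might absorb most of $W_k$, so the challenge is to pick the orthonormal basis of $W_k$ so that exactly one basis vector lies along the projection of a nonzero element of $V_2^\perp$ --- this is precisely what furnishes the required perpendicular witness $u$. This is legitimate because Theorem \ref{CCPW} guarantees the complement-property conclusion for the full union of ONBs \emph{for every} choice of orthonormal basis for each $W_i$, so we are free to engineer the ONB of $W_k$.
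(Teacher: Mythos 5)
Your proof is correct and rests on the same two pillars as the paper's: Theorem \ref{CCPW} (so the union of orthonormal bases of the $W_i$ does phase retrieval and hence has the complement property) and a contradiction obtained by splitting a suitably adapted orthonormal basis of the omitted subspace between the two non-spanning sides. The execution, however, is genuinely different. The paper first proves $\dim K_1=\dim K_2$ and then excludes a common value $\le N-2$; in both steps it takes an orthonormal basis of $W_M$ adapted to $W_M\cap K_1$, sends all but one of the directions orthogonal to $W_M\cap K_1$ to the $I_1$ side and the last one to the $I_2$ side, and closes with a dimension count driven by $\codimm K_1$. You attack $\dim V_1\le N-2$ head-on, split on whether $V_2+W_k=\mathcal{H}_N$, and in the nontrivial case choose the basis of $W_k$ so that its first vector is the normalized projection onto $W_k$ of some $u\in V_2^{\perp}$; sending only that one vector to the $I_1$ side (which can absorb an extra dimension precisely because $\dim V_1\le N-2$) and the rest to the $I_2$ side, you exhibit $u$ itself as a nonzero vector orthogonal to the entire $I_2$ side. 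Your route buys an explicit orthogonality witness in place of the paper's codimension bookkeeping, and your Case A shows in passing that $V_\ell+W_k=\mathcal{H}_N$ must hold for $\ell=1,2$ in any event; the paper's route yields the intermediate fact $\dim K_1=\dim K_2$ on its own, but its write-up leaves the placement of the basis of $W_M\cap K_1$ implicit, whereas your verification is self-contained. Neither argument actually uses the hypothesis that every $M-1$ of the subspaces fail phase retrieval.
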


\begin{proof}
Without loss of generality, assume $I = \{1,\dots, M-1\}$. Let $K_l=\lspan \{f_{i,j}\}_{\left(i,j\right)\in I_l}$ for $l=1,2$.
\vskip12pt
\noindent {\bf Case 1:}  $K_1\neq K_2$.
\vskip12pt
By way of contradiction assume that $\dim K_1 \neq \dim K_2$. Hence for some
$n \geq 1$, $\codimm(\lspan K_1)=n$ and $\codimm(\lspan K_2)\geq n+1$. Then
 \[\dim\left( W_M \cap K_1\right) \geq \dim\left(W_M\right)-n.\]
 Set $W_M^{'}:=\{\mbox{the orthogonal complement of } W_M \cap K_1 \mbox{ inside of } W_M\}$.  So $\dimm(W'_M) \le n$.
  Next, pick an orthonormal basis $\mathcal{G}=\{g_i\}_{i=1}^m$ for $W'_M$ and an orthonormal
  basis $\mathcal{L}=\{h_i\}_{i=1}^p$ for $W_M \cap K_1$.  Note that $m\le n$.  Now, $\mathcal{G}\cup \mathcal{L}$
  is an orthonormal basis for $W_M$.  Add $\{g_i\}_{i=1}^{m-1}$ to $\{f_{ij}\}_{(i,j)\in I_1}$ and add $g_m$ to
  $\{f_{ij}\}_{(i,j)\in I_2}$.  Then neither sets span the space and so $\{W_i\}_{i=1}^M$ does not do phase retrieval - a
  contradiction.
  \vskip12pt
  \noindent {\bf Case 2:}  $K_1=K_2 < N-1$.
  \vskip12pt
By the argument of Case 1, neither of our sets span (because $K_1 \le N-2$) and so $\{W_i\}_{i=1}^M$ does not do
phase retrieval - a contradiction.
  \end{proof}

As a partial converse to Theorem \ref{lem200}, we have the following proposition for the one-dimensional case.

\begin{proposition}\label{lem201}
  Assume $\{f_i\}_{i=1}^M$ are vectors in $\mathcal{H}_N$ with the following property:

  Whenever we partition $\{f_i\}_{i=1}^M$ into two non-spanning subsets $\{f_i\}_{i\in I_1}$, $\{f_i\}_{i\in I_2}$ then
  each of these sets of vectors spans a hyperplane.  
	
	Then there is an open dense set
  of vectors $f_0\in \mathcal{H}_N$ so that
  $\{f_i\}_{i=0}^M$ does phase retrieval.
\end{proposition}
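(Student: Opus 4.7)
The plan is to reduce to the complement property (since this is the one-dimensional setup where, in the real case, phase retrieval is equivalent to the complement property), and then show that $f_0$ must merely avoid a finite union of hyperplanes. Concretely, by Balan--Casazza--Edidin's theorem (quoted earlier in the paper) it suffices to produce an open dense set $U \subset \mathcal{H}_N$ such that for every $f_0 \in U$ the family $\{f_i\}_{i=0}^M$ has the complement property.

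Call a partition $(J_1,J_2)$ of $\{1,\dots,M\}$ \emph{bad} if neither $\lspan\{f_i\}_{i\in J_1}$ nor $\lspan\{f_i\}_{i\in J_2}$ is all of $\mathcal{H}_N$; by hypothesis, each of these spans is then a hyperplane. Define
\[
U := \mathcal{H}_N \setminus \bigcup_{(J_1,J_2)\text{ bad}}\bigl(\lspan\{f_i\}_{i\in J_1}\cup \lspan\{f_i\}_{i\in J_2}\bigr).
\]
There are only finitely many bad partitions, each contributing two hyperplanes, so the excluded set is a finite union of hyperplanes and hence $U$ is open and dense.

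Now fix $f_0 \in U$ and consider an arbitrary partition $(I_1,I_2)$ of $\{0,1,\dots,M\}$; I want to show at least one of $\{f_i\}_{i\in I_1}, \{f_i\}_{i\in I_2}$ spans $\mathcal{H}_N$. Without loss of generality assume $0\in I_1$, and set $J_1 = I_1\setminus\{0\}$ and $J_2 = I_2$. If $(J_1,J_2)$ is not bad, then one of $\{f_i\}_{i\in J_1}, \{f_i\}_{i\in J_2}$ already spans $\mathcal{H}_N$, and we are done. If $(J_1,J_2)$ is bad, then $\lspan\{f_i\}_{i\in J_1}$ is a hyperplane and by the choice of $f_0\in U$ we have $f_0\notin \lspan\{f_i\}_{i\in J_1}$; hence $\{f_i\}_{i\in I_1} = \{f_0\}\cup \{f_i\}_{i\in J_1}$ spans $\mathcal{H}_N$. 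Either way, the complement property holds for $\{f_i\}_{i=0}^M$, and therefore this family does phase retrieval.

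The main potential obstacle is really just bookkeeping: verifying that the bad-partition collection is finite and that the two hyperplanes $\lspan\{f_i\}_{i\in J_1}$ and $\lspan\{f_i\}_{i\in J_2}$ really are hyperplanes (so that their union has empty interior) — both of which are immediate from the finiteness of $M$ and from the hypothesis, respectively. No deeper argument is needed.
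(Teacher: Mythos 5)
Your proof is correct and follows essentially the same route as the paper's: pick $f_0$ outside the (finite, measure-zero) union of spans of non-spanning subsets, and observe that in any partition of $\{f_i\}_{i=0}^M$ with neither side spanning, deleting $f_0$ would leave two hyperplane-spanning sets, one of which is completed to all of $\mathcal{H}_N$ by $f_0$. Your write-up is somewhat more explicit than the paper's (naming the bad partitions and invoking the complement-property characterization of real phase retrieval), but the underlying argument is identical.
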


\begin{proof}
  Choose any vector from the open dense set of
  vectors $f_0$ which is not in $\lspan(\{f_i\}_{i\in J})$ whenever this family does not span the space.
  If we partition $\{f_i\}_{i=0}^N$ into $\{f_i\}_{i\in I_j}$, $j=1,2$, then one of these sets must span the space.  I.e.  If
  neither set spans the space, then by removing the vector $f_0$ each family of vectors spans a hyperplane and one of them must contain $f_0$ and hence spans the space.  \end{proof}

	The following corollary is a rephrasing of Theorem \ref{lem200} in such a way which may be more useful when proving a collection of subspaces fail phase retrieval. 
	
	\begin{corollary}\label{cor987}
	Suppose $\{W_i\}_{i=1}^M$ are subspaces in $\mathcal{H}_N$. Let $\{\varphi_{ij}\}_{j=1}^{I_i}$ be an orthonormal basis for $W_i$ for each $i=1,\dots,M$ and suppose there exists a partition of $\{\varphi_{ij}\}_{j=1,i=1}^{I_i,M}$ into two non-spanning sets $F_1,F_2$. If $\dimm(\lspan  F_1) \leq M-2$ then for all subspaces $W_{M+1}$ of $\mathcal{H}_N$, $\{W_i\}_{i=1}^{M+1}$ fails phase retrieval.
	\end{corollary}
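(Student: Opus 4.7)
The plan is to argue by contradiction using Theorem \ref{lem200}, after reducing to a minimal phase-retrieving subcollection. Suppose for contradiction that some $W_{M+1}\subset\mathcal{H}_N$ makes $\{W_i\}_{i=1}^{M+1}$ allow phase retrieval.

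First I would observe that $\{W_i\}_{i=1}^M$ cannot itself allow phase retrieval: otherwise Theorem \ref{CCPW} would force the orthonormal-basis union $\bigcup_{i=1}^M\{\varphi_{ij}\}_{j=1}^{I_i}$ to allow phase retrieval, and since phase retrieval always implies the complement property (in the real and complex cases alike), this contradicts the existence of the non-spanning partition $F_1,F_2$. Because phase retrieval is monotone under enlarging the family, no subfamily of $\{W_1,\dots,W_M\}$ can allow phase retrieval either.

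Next I would pick $J\subset\{1,\dots,M+1\}$ to be inclusion-minimal among subsets with $\{W_i\}_{i\in J}$ allowing phase retrieval; such $J$ exists by the contradiction hypothesis on the full family. The preceding paragraph forces $M+1\in J$. Set $I=J\setminus\{M+1\}$. By minimality of $J$, removing any single element from $J$ destroys phase retrieval, so $\{W_i\}_{i\in J}$ satisfies exactly the hypotheses of Theorem \ref{lem200}.

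Finally, I would restrict the given partition to $I$ by setting $F_l'=F_l\cap\bigcup_{i\in I}\{\varphi_{ij}\}_{j=1}^{I_i}$ for $l=1,2$. This is a partition of the union of the chosen orthonormal bases of $\{W_i\}_{i\in I}$, and since $\lspan F_l'\subset \lspan F_l\neq\mathcal{H}_N$, neither side spans, while $\dim\lspan F_1'\leq \dim\lspan F_1\leq N-2$ (reading the stated bound as a codimension-one-or-more deficiency, i.e.\ $\dim\lspan F_1\leq N-2$, which is the form needed for the argument). Theorem \ref{lem200} then forces $\dim\lspan F_1'=N-1$, the desired contradiction. The main subtle point is that the low-dimensional witness $F_1$ survives the reduction to the minimal phase-retrieving subfamily; this is exactly what the first step is designed to guarantee, since $M+1\in J$ forces $I\subset\{1,\dots,M\}$ and hence $F_1'\subset F_1$ inherits the dimension bound.
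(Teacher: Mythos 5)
Your proof is correct and follows the route the paper intends: the paper gives no proof at all, calling the corollary ``a rephrasing of Theorem \ref{lem200},'' and your contrapositive argument via that theorem is exactly the intended one (and your reading of the bound $\dimm(\lspan F_1)\leq M-2$ as the surely-intended $\dimm(\lspan F_1)\leq N-2$ is right). The one substantive thing you add is the reduction to an inclusion-minimal phase-retrieving subfamily $J$ with $M+1\in J$; this step is genuinely needed, since Theorem \ref{lem200} requires that \emph{every} subcollection with one fewer subspace fails phase retrieval, which does not follow directly for $\{W_i\}_{i=1}^{M+1}$, and the paper silently skips this point.
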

	
	We would like to know if the converse of Corollary \ref{cor987} is true. This is explicitly stated in the following problem.

\begin{problem}
Let $\{W_i\}_{i=1}^M$ be subspaces in $\mathcal{H}_N$. If for all subspaces $W_{M+1}$ of $\mathcal{H}_N$, $\{W_i\}_{i=1}^{M+1}$ fails phase retrieval then does there exist an orthonormal basis $\{\varphi_{ij}\}_{j=1}^{I_i}$ of $W_i$ and a partition $F_1,F_2$ of $\{\varphi_{ij}\}_{j=1,i=1}^{I_i,M}$ such that $\dimm(\lspan  F_1) \leq M-2$ and $\dimm(\lspan  F_2) \leq M-1$?
\end{problem}

In light of Theorem \ref{lem200}, we now further analyze the two hyperplanes spanned by a partition of orthonormal bases for our subspaces which fail phase retrieval and find properties of the vectors within these hyperplanes.

\begin{proposition}
Let $\{W_i\}_{i=1}^{M+1}$ in $\mathcal{H}_N$ yield phase retrieval. Assume $\{\varphi_{ij}\}_{j=1}^{L_i}$ is an orthonormal basis for $W_i$, for $i=1,\dots,M$. Assume there exists a partition $I_1, I_2$ of $\{\varphi_{ij}\}_{j=1,i=1}^{L_i,M}$ so that $\dimm(\lspan (I_s))=N-1, s=1,2$. Then there exists an orthonormal basis $\{\psi_{ij}\}_{j=1}^{L_i}$ for $W_i$, for $i= 1,\dots,M$ and a partition $J_1, J_2$ of $\{\psi_{ij}\}_{j=1,i=1}^{L_i,M}$ satisfying:
\begin{enumerate}
\item $\dimm(\lspan (J_s))=N-1$ for $s=1,2$, and
\item $J_1$ contains at most one vector from $\{\psi_{ij}\}_{j=1}^{L_i}$ for each $i=1,\dots,M$.
\end{enumerate}
\end{proposition}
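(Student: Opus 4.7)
The plan is to adapt each $W_i$'s new orthonormal basis to the hyperplane $H_2 := \lspan(I_2)$. Concretely, for each $i \in \{1,\dots,M\}$ with $W_i \subseteq H_2$ I would pick any orthonormal basis of $W_i$ and place all $L_i$ vectors in $J_2$; for each $i$ with $W_i \not\subseteq H_2$ the intersection $W_i \cap H_2$ has dimension $L_i-1$, and I would choose an orthonormal basis $\psi_{i,1},\dots,\psi_{i,L_i-1}$ of $W_i \cap H_2$ (placing these in $J_2$) together with a unit vector $\psi_{i,L_i}$ in the one-dimensional orthogonal complement of $W_i \cap H_2$ inside $W_i$ (placing it in $J_1$). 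Condition~(2) is immediate from the construction, since each $W_i$ contributes at most one vector to $J_1$.

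Verifying $\lspan(J_2) = H_2$ is routine: every vector of $J_2$ lies in $H_2$, while each vector of the original $I_2$ lies in some $W_i \cap H_2$ (because $I_2 \subseteq H_2$ and $\varphi_{ij}\in W_i$), and $J_2$ now contains a full orthonormal basis of every $W_i \cap H_2$; hence $\lspan(J_2) \supseteq \lspan(I_2) = H_2$, giving equality. For the $s=1$ case of condition (1), I would invoke the phase retrieval hypothesis via Theorem~\ref{CCPW}. First, applying the complement property to the original-basis partition $(I_1,\, I_2 \cup \text{basis}(W_{M+1}))$ forces $W_{M+1} \not\subseteq H_2$, since $\lspan(I_1) = H_1$ is a proper hyperplane. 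I would then choose an orthonormal basis $\xi_1,\dots,\xi_{L_{M+1}}$ of $W_{M+1}$ with $\xi_1,\dots,\xi_{L_{M+1}-1} \in W_{M+1} \cap H_2$ and $\xi_{L_{M+1}} \notin H_2$. By Theorem~\ref{CCPW} the combined system $J_1 \cup J_2 \cup \{\xi_j\}_{j=1}^{L_{M+1}}$ has the complement property; applied to the partition $(J_1 \cup \{\xi_{L_{M+1}}\},\, J_2 \cup \{\xi_1,\dots,\xi_{L_{M+1}-1}\})$, the right side lies entirely in $H_2$ and so fails to span $\mathcal{H}_N$, forcing the left side to span, whence $\lspan(J_1) + \lspan(\xi_{L_{M+1}}) = \mathcal{H}_N$ and $\dim \lspan(J_1) \ge N-1$.

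The step I expect to be the main obstacle is the matching upper bound $\dim \lspan(J_1) \le N-1$. Since $\lspan(J_1) = \sum_{i:\, W_i \not\subseteq H_2}\bigl(W_i \ominus (W_i \cap H_2)\bigr)$, this amounts to ruling out the degenerate case in which the one-dimensional ``complementary directions'' over-span all of $\mathcal{H}_N$. I would attack this either by pushing the complement-property argument further---if $\lspan(J_1) = \mathcal{H}_N$, a careful repartition of the basis vectors of $\{W_i\}_{i=1}^{M+1}$ should produce two non-spanning sides contradicting the phase retrieval hypothesis---or, failing that, by running the symmetric construction adapted to $H_1$ in place of $H_2$ and arguing that at least one of the two choices must deliver a hyperplane. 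Combined with the lower bound this gives $\dim \lspan(J_1) = N-1$, completing condition (1).
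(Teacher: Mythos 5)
Your construction is essentially the paper's. The paper likewise replaces, inside each $W_i$ meeting $I_1$ nontrivially, the $I_1$-part of its basis by an orthonormal basis of its intersection with the hyperplane $H_2=\lspan(I_2)$ together with one orthogonal unit vector; your version (adapting all of $W_i$ to $H_2$ rather than just its $I_1$-part) yields the same set $J_1$ up to signs, since $W_i\ominus(W_i\cap H_2)=W_i'\ominus(W_i'\cap H_2)$ where $W_i'=\lspan(I_1\cap\{\varphi_{ij}\}_j)$. Your verifications of (2), of $\lspan(J_2)=H_2$, and of the lower bound $\dim\lspan(J_1)\ge N-1$ via the complement property are all correct; in fact the paper never spells out that lower bound, so your argument there is a genuine improvement in completeness.

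The step you flag as the main obstacle, namely $\dim\lspan(J_1)\le N-1$, is not an obstacle at all: it follows in one line from your own construction and needs no phase retrieval hypothesis. For each $i$ with $W_i\not\subseteq H_2$, write $W_i'=\lspan(I_1\cap\{\varphi_{ij}\}_j)$ and $W_i''=\lspan(I_2\cap\{\varphi_{ij}\}_j)$, so that $W_i=W_i'\oplus W_i''$ orthogonally and $W_i''\subseteq H_2$, hence $W_i''\subseteq W_i\cap H_2$. Your vector $\psi_{i,L_i}$ is orthogonal to $W_i\cap H_2$ inside $W_i$, hence orthogonal to $W_i''$, hence lies in $W_i'\subseteq\lspan(I_1)$. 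Therefore $\lspan(J_1)\subseteq\lspan(I_1)$, which has dimension $N-1$ by hypothesis. (This containment is exactly what the paper's version of the construction makes automatic by only modifying the $I_1$-part of each basis.) Note also that your first proposed fallback cannot work as stated: if $\lspan(J_1)$ were all of $\mathcal{H}_N$, then the partition $(J_1,J_2)$ would have a spanning side, so no violation of the complement property---and hence no contradiction with phase retrieval---is available from that direction. With the one-line observation above inserted, your proof is complete and coincides with the paper's.
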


\begin{proof}
Let $K_s=\lspan(I_s)$ for $s=1,2$. Without loss of generality we can assume that there does not exist any $\varphi_{ij} \in I_1$ in $K_2$ or else we can move these vectors without affecting $K_1, K_2$. (Note that for any vector $\varphi_{ij}$ in $I_1$ which lies in $K_2$, we can move this vector to $I_2$ without affecting $K_2$. Also, this will not decrease dim$(K_1)$ because if it did then we would be left with two non-spanning sets with dimensions $N-2$ and $N-1$ respectively, contradicting the fact that they both must span hyperplanes.). So $K_s$ is a hyperplane for $s=1,2$.

Case 1: If $I_1$ contains at most one vector from $\{\varphi_{ij}\}_{j=1}^{L_i}$ for each $i=\{1,\dots,M\}$ then we are done.

Case 2: Fix $i$. Assume $I_1$ contains at least two vectors from $\{\varphi_{ij}\}_{j=1}^{L_i}$ (so $\dimm(\lspan(W_i)) \geq 2$). Let $W_i'= \lspan\{\varphi_{ij}| \varphi_{ij} \in I_1$ for some $j \in \{1,\dots, L_i\}\}$. Now $\codimm(\lspan_{W_i}(K_2 \cap W_i'))=1$. Choose an orthonormal basis $\{\psi_{ij}\}_{j=1}^{T_i}$ for $K_2 \cap W_i'$ and choose $\psi_{i,T_i+1} \perp (K_2 \cap W_i')$ and $\psi_{i,T_i+1}\in W_i'$. Throw away all $\{\varphi_{ij}\}_{j=1}^{L_i}$ which lie in $W_i'$. Put $\psi_{i,T_i+1}$ into $I_1$ and put the rest of $\{\psi_{ij}\}_{j=1}^{T_i}$ into $I_2$. Repeat this for each $i \in \{1,\dots,M\}$. Define these new sets to be $J_1, J_2$ respectively, and we are done.
\end{proof}

\begin{corollary}\label{lem1.abc}
For each $n\in N$ let $\{\varphi_{i}^{n}\}_{i=1}^{k}$ be a sequence in $\mathcal{H}_{N}$. If 
\[\dimm(\lspan \{\varphi_{i}^{n}\}_{i=1}^{k})=N-1\] for all $n\in N$ and $\varphi_{i}^{n}\to\varphi_{i}$ for each $i\in\{1,\ldots,k\}$, then 
\[\dimm( \lspan \{\varphi_{i}\}_{i=1}^{k})\leq N-1.\]
\end{corollary}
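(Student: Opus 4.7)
The plan is to exploit the fact that a subspace of dimension exactly $N-1$ in $\mathcal{H}_N$ is a hyperplane, and hyperplanes are cut out by a single linear equation. So for each $n$, since $\lspan\{\varphi_i^n\}_{i=1}^{k}$ has dimension $N-1$, there is a unit vector $v^n\in\mathcal{H}_N$ with $\langle\varphi_i^n,v^n\rangle=0$ for every $i\in\{1,\dots,k\}$. This reduces the problem to a compactness argument on the unit sphere.

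Next I would apply compactness of the unit sphere of $\mathcal{H}_N$: the sequence $\{v^n\}_{n\in \mathbb{N}}$ has a convergent subsequence $v^{n_\ell}\to v$ with $\|v\|=1$. Using the (joint) continuity of the inner product and the hypothesis $\varphi_i^{n_\ell}\to\varphi_i$, I pass to the limit in the equation $\langle\varphi_i^{n_\ell},v^{n_\ell}\rangle=0$ to obtain $\langle\varphi_i,v\rangle=0$ for each $i\in\{1,\dots,k\}$. Therefore every $\varphi_i$ lies in the hyperplane $v^\perp$, and consequently
\[
\lspan\{\varphi_i\}_{i=1}^{k}\subseteq v^\perp,
\]
which forces $\dimm(\lspan\{\varphi_i\}_{i=1}^{k})\leq N-1$, as required.

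There is no serious obstacle: the only point worth checking is that $v\neq 0$, which is guaranteed because each $v^n$ is chosen to be a unit vector and the unit sphere is closed. If one preferred a purely algebraic argument, an equivalent route would be to observe that $\dimm(\lspan\{\varphi_i^n\}_{i=1}^{k})=N-1$ is equivalent to the vanishing of every $N\times N$ minor of the matrix whose columns are $\varphi_1^n,\dots,\varphi_k^n$; since minors are continuous (in fact polynomial) functions of the entries, they remain zero in the limit, and hence the limiting matrix has rank at most $N-1$. Either approach is short and clean, and the normal-vector route is the one I would write up.
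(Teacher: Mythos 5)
The paper states this corollary without any proof, so there is nothing to compare your argument against; on its own merits, your proof is correct and complete. Either of your two routes works: extracting a unit normal $v^n$ to each hyperplane $\lspan\{\varphi_i^n\}_{i=1}^k$, passing to a convergent subsequence on the compact unit sphere of $\mathcal{H}_N$, and using continuity of the inner product to get $\langle \varphi_i, v\rangle = 0$ for all $i$; or noting that rank at most $N-1$ is the closed condition that all $N\times N$ minors vanish (vacuously so when $k<N$) and that this persists under limits. Both are standard closedness-of-rank arguments, and either would serve as the omitted proof.
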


{\bf Acknowledgment:} All four authors were supported by:  NSF DMS 1307685; NSF ATD 1321779; and AFOSR:  FA9550-11-1-0245

\bibliographystyle{amsalpha}

\end{document}